\documentclass[a4paper]{article}
\usepackage{appendix}
\usepackage{amsmath, amsthm, amssymb}
\usepackage{mathtools}
\usepackage[backref=page, colorlinks=true, linkcolor=blue, urlcolor=blue, citecolor=blue]{hyperref}
\usepackage{faktor}

\renewcommand*{\backref}[1]{}
\renewcommand*{\backrefalt}[4]{%
    \ifcase #1 Not cited.%
    \or        Cited on page~#2.%
    \else      Cited on pages~#2.%
    \fi}

\let\phi\varphi 
\let\epsilon\varepsilon 

\newtheorem{theorem}{Theorem}[section]

\newtheorem{remark}[theorem]{Remark}
\newtheorem{corollary}[theorem]{Corollary}
\newtheorem{definition}[theorem]{Definition}
\newtheorem{lemma}[theorem]{Lemma}
\newtheorem{proposition}[theorem]{Proposition}

\newcommand{\RR}{\mathbb{R}}
\newcommand{\ZZ}{\mathbb{Z}}
\newcommand{\rp}{\mathbb{R}\mathrm{P}^2}

\newcommand{\ncg}{\mathcal{NC}}
\newcommand{\fncg}{\mathcal{NC}^\dagger}

\title{Non-orientable surfaces have stably unbounded homeomorphism group}
\author{Lukas Böke\footnote{boeke@math.lmu.de}}
\date{}

\begin{document}

\maketitle

\begin{abstract}
Using a recent result of Bowden, Hensel and Webb, we prove the existence of homeomorphisms with positive stable commutator length in the groups of homeomorphisms of the real projective plane and Möbius strip which are isotopic to the identity.
This completes the answer to a question posed by Burago, Ivanov and Polterovich on the boundedness of diffeomorphism groups of surfaces.
\end{abstract}

\section{Introduction}
In this article, we study the (un-)boundedness of norms on the group of homeomorphisms and diffeomorphisms of the Möbius strip and real projective plane that are isotopic to the identity.
This type of question first appeared in the work of Burago, Ivanov and Polterovich \cite{BIP}, where the authors considered the identity components of groups of compactly supported $C^\infty$-diffeomorphisms of various manifolds, and found that for all compact 3-manifolds and certain compact 4-manifolds these groups only admit conjugation-invariant norms that are bounded.
Their results have been extended to higher dimensional manifolds in \cite{tsuboi}.
For surfaces, the situation is different:
while the sphere again only admits conjugation-invariant norms that are bounded, almost all other surfaces have been found to have unbounded norms.
The main result of this article answers this question for the last two remaining cases: 

\begin{theorem}\label{thm:intro}
The group of homeomorphisms of the real projective plane $\rp$ and Möbius strip $M$ which are isotopic to the identity is stably unbounded with respect to commutator length in the sense of \cite{BIP}.
\end{theorem}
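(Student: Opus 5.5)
The plan is to follow the strategy of Bowden, Hensel and Webb: build an action of $G=\mathrm{Homeo}_0(S)$, for $S=\rp$ or $S=M$, on a Gromov hyperbolic graph. We then find an element acting with positive stable translation length that is also \emph{WWPD} (or satisfies Bestvina--Fujiwara's condition relative to conjugacy classes). By the Bestvina--Bromberg--Fujiwara criterion, this yields an infinite-dimensional space of unbounded homogeneous quasimorphisms on $G$. A nontrivial homogeneous quasimorphism $\phi$ with $\phi(g)\neq 0$ forces $\mathrm{scl}(g)\ge |\phi(g)|/(2D(\phi))>0$. So $g$ has positive stable commutator length, which is exactly stable unboundedness in the sense of \cite{BIP}.

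The first step is to fix the graph. For $\rp$, I would not use the whole surface. Instead I would puncture at a finite set $P$ and look at curves in $\rp\setminus P$, exactly as Bowden--Hensel--Webb do with the fine curve graph relative to a point set. Alternatively, I would pass to the orientation double cover. Here I would consider the fine curve graph $\fncg$ of $\rp$ whose vertices are essential simple closed curves; on $\rp$ these are the one-sided curves, and any two of them intersect. That forces a modified adjacency, and I commit to the \emph{nonseparating/one-sided} version: vertices are essential curves (or arcs, for $M$), and edges join those meeting in at most one point, or in finitely many points. I would then prove hyperbolicity by comparing with the curve graph of a punctured surface, using the surgery and unicorn-path arguments of Bowden--Hensel--Webb. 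For $M$, one can double along the boundary or pass to the orientation double cover annulus/torus. A torus-type fine graph then carries a $\ZZ/2$ deck action, and $G$ lifts to the commutant of the deck involution in $\mathrm{Homeo}_0(T^2)$. Quasimorphisms on the torus group restricted to this subgroup give what we need, provided the hyperbolic element can be chosen equivariantly.

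The second step is to produce the element. Take a pseudo-Anosov map $\psi$ of $\rp$ minus finitely many points; such maps exist once there are enough punctures, for instance four. Realize $\psi$ by a homeomorphism isotopic to the identity on $\rp$ that fixes $P$ (point-pushing), and possibly replace it by a power. Then I would show, as in Bowden--Hensel--Webb, that the orbit map to $\ncg(\rp\setminus P)$ is a quasi-geodesic. Hence $g$ acts loxodromically on $\fncg$, via the $1$-Lipschitz forgetful comparison. For the Möbius strip, I would instead use a lift to the orientation cover commuting with the deck transformation.

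The main obstacle is the WWPD-type condition, which is needed to upgrade loxodromic elements to nontrivial quasimorphisms, specifically that $g$ is not conjugate to $g^{-1}$ up to bounded error. I would first try to reduce this to the mapping class group of $\rp\setminus P$: there, pseudo-Anosovs are WPD on the curve graph, and the axis stabilizers are virtually cyclic. Transporting WPD to $G$ also has a hard part, controlling homeomorphisms that nearly fix long segments of the axis; I would handle it via the ``fine-to-coarse'' projection estimates of Bowden--Hensel--Webb. As a fallback I would choose $\psi$ so that $\psi$ and $\psi^{-1}$ are not conjugate in the mapping class group of the punctured surface, which gives a non-vanishing Bestvina--Fujiwara quasimorphism directly.
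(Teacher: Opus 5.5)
\textbf{There is a genuine gap: the step you yourself flag as the main obstacle is left open, and the fallback you offer for it is not valid.} The rest of your outline matches the paper: hyperbolicity of $\fncg(\rp)$ by comparison with $\ncg(\rp\setminus P)$, a loxodromic element from a pseudo-Anosov class rel $P$, WPD downstairs, then Bestvina--Fujiwara and Bavard duality.

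Whether phrased as non-quasi-invertibility or as WWPD, you must control homeomorphisms that carry long pieces of the axis of $g$ (or its fixed pair in $\partial_\infty\fncg$) close to itself with orientation reversed. These homeomorphisms need not preserve $P$. Such maps have no class in $\operatorname{Mcg}(\rp\setminus P)$, and there is no homomorphism $\operatorname{Homeo}_0(\rp)\to\operatorname{Mcg}(\rp\setminus P)$ to pull a quasi-morphism back along. So non-conjugacy of $[\psi]$ and $[\psi]^{-1}$ downstairs gives nothing ``directly'' on $\operatorname{Homeo}_0(\rp)$. Nor can WPD be transported, since pointwise stabilisers of pairs of curves in $\operatorname{Homeo}_0$ are enormous.

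The missing idea, which the paper takes from BHW2, is that $P$ must be recoverable from the dynamics of $g$. The argument runs as follows.
\begin{itemize}
\item Take $g$ to be the Thurston representative of a pseudo-Anosov class relative to exactly its set $P$ of 1-prongs. An arbitrary point-push realisation of the class carries no such structure.
\item Its foliations are then ending foliations of a BSF structure on $\rp$. Via the quasi-convex subgraphs $\mathcal{D}(L)$ and the small width lemma, they define the two fixed points $\tau_{\mathcal{F}_v},\tau_{\mathcal{F}_h}$ of $g$ on $\partial_\infty\fncg(\rp)$.
\item The ``bells'' lemma shows that any homeomorphism nearly exchanging these two points moves every 1-prong by less than $\epsilon$.
\item A $C^0$-small isotopy near $P$ then turns it into a homeomorphism preserving $P$, with the same effect on the finitely many relevant axis curves. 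This is a quasi-inversion of $[g]$ in $\ncg(\rp\setminus P)$, contradicting WPD together with non-conjugacy to the inverse.
\end{itemize}
Without an argument of this kind, you have not exhibited any homogeneous quasi-morphism that is non-zero on your element.

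\textbf{Your reduction for the M\"obius strip also fails.} The orientation double cover of $M$ is an annulus, not a torus. The orientation cover of $\rp$ is $S^2$, whose $\operatorname{Homeo}_0$ carries no non-trivial homogeneous quasi-morphisms, so that alternative is dead as well.

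If you instead double $M$ to a Klein bottle and lift to $T^2$, the image of $f$ is the identity on the second M\"obius band. Its lift in $\operatorname{Homeo}_0(T^2)$ therefore fixes pointwise the preimage of that band's core, which is a non-separating curve in $T^2$. So it acts elliptically on $\fncg(T^2)$, every Bestvina--Fujiwara quasi-morphism vanishes on it, and it is not a pseudo-Anosov representative to which BHW2 applies. No hyperbolic element can be ``chosen equivariantly'' inside this image.

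The paper instead observes that the interior of $M$ is the once-punctured projective plane. It runs the same fine-curve-graph argument for $N=\rp\setminus\{\text{pt}\}$, then pulls back along the restriction-to-the-interior homomorphism from homeomorphisms of $M$ to homeomorphisms of $N$, which cannot increase commutator length.
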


Therefore, all compact surfaces except the sphere admit unbounded norms.
To prove unboundedness, the central tool is Bavard duality \cite{bavard}, which links a norm, namely commutator length, to the existence of non-trivial homogeneous quasi-morphisms.
An explicit construction of quasi-morphism was given by Brooks in \cite{Brooks} for free groups.
This construction was generalised in \cite{Bestvina} by Bestvina and Fujiwara to groups of isometries of a hyperbolic space, which they applied to (subgroups of) mapping class groups of hyperbolic surfaces.

In their paper, they consider the action of the mapping class group on the curve graph (introduced by Harvey \cite{Harvey}, shown to be hyperbolic by Masur and Minsky \cite{masur-minsky}), which satisfies a newly identified property, namely weak proper discontinuity (see section \ref{subsec:actions-on-cg}).

An analogous strategy was developed by Bowden, Hensel and Webb in \cite{BHW1}, where the action of the identity component of the homeomorphism group of an orientable surface with positive genus on the newly introduced \emph{fine} curve graph was studied.

This strategy was adapted to non-orientable surfaces of genus at least three in \cite{kimura-kuno-qms}.
In \cite{BHW2}, the action of these groups on the fine curve graph is studied further, and a tool is provided that characterises some of the elements whose powers have increasingly large commutator length.
In addition, \cite{BHW2} provides a connection between certain foliations on surfaces and boundary points of the corresponding fine curve graph.
In this article, we aim to explain that the techniques and results of Bowden, Hensel and Webb generalise to non-orientable surfaces with genus 1.

\paragraph{Organisation of the paper.}
In the next section, we provide some background on the papers of Kuno \cite{kuno-hyperbolicity} and Bowden, Hensel and Webb \cite{BHW1} \cite{BHW2}.
In the third section, we combine results of \cite{kuno-hyperbolicity} and \cite{BHW1} to show that the Möbius strip and real projective plane have hyperbolic and unbounded fine curve graphs.
In the fourth section, we use that result and the methods of \cite{BHW2} to prove that both Möbius strip and real projective plane have stably unbounded $\operatorname{Homeo}_0$.

\paragraph{Acknowledgements.}
The author wants to thank Sebastian Hensel and Javier de la Nuez Gonzalez.

\section{Background}
In this section, we provide the background needed for the following sections.
We start with introductions to the objects we are interested in.
After fixing some notations and terminology for groups of homeomorphisms of surfaces, we provide some background on quasi-morphisms on groups and curve graphs of surfaces.
The next two sections describe the geometry of the curve graphs and the actions of the groups we discussed on curve graphs.
The last section provides some less-known tools tailored to the strategy of \cite{BHW2}, which we follow closely.

\subsection{Groups of homeomorphisms}
\paragraph{Definitions.}
Let $M$ be a compact manifold, possibly with boundary.
We denote by $\operatorname{Homeo}(M)$ the groups of homeomorphisms which fix the boundary pointwise, and use $\operatorname{Homeo}_0(M)$ for the component containing the identity.
For a finite set $P\subset M$, we denote by $\operatorname{Homeo}(M\setminus P)$ the subgroup of $\operatorname{Homeo}(M)$ which fixes \emph{the set} $P$.

The \emph{mapping class group} of $M$ is defined as 
\[\operatorname{Mcg}(M) \coloneqq \faktor{\operatorname{Homeo}(M)}{\operatorname{Homeo}_0(M)},\]
and analogously for $\operatorname{Mcg}(M\setminus P)$.
Note that in most of the literature on mapping class groups, the homeomorphism groups in the above definition are replaced with the respective groups of orientation preserving homeomorphisms.
As we are mostly concerned with non-orientable surfaces, we have to drop this condition.

For the respective subgroups of $C^\infty$-diffeomorphisms, we will use the notations $\operatorname{Diff}$ and $\operatorname{Diff}_0$.

\paragraph{Point-pushing and surface braids.}
A way to understand at least some elements of $\operatorname{Homeo}_0(M)$ is through point-pushes, which arise from the Birman exact sequence (see Theorem 4.6 in \cite{primer}):
\begin{equation}\label{eq:birman-ses} 1 \rightarrow \pi_1(M,p) \xrightarrow{\textsc{Push}} \operatorname{Mcg}(M\setminus \{p\}) \xrightarrow{\textsc{Forget}} \operatorname{Mcg}(M) \rightarrow 1 .\end{equation}
We will refer to classes in the image of \textsc{Push} as \emph{point-pushes}.

One can also push more than one point:
a more general version of the Birman exact sequence (see Theorem 9.1 in \cite{primer}) accomplishes this by replacing the fundamental group of the surface in \eqref{eq:birman-ses} with the fundamental group of the unordered configuration space of the surface.
In this case, classes in the image of \textsc{Push} are called \emph{multi-point-pushes} or \emph{surface braids} in analogy to the Artin braid groups.
We can continue the analogy by considering ordered configuration spaces instead of unordered ones, and this way we get the \emph{pure surface braids}.

\paragraph{Pseudo-Anosov maps of non-orientable surfaces.}
The Nielsen-Thurston classification (see e.g. chapter 13 of \cite{primer}) tells us that for orientable compact manifolds, each mapping class can be periodic, reducible or pseudo-Anosov, and that pseudo-Anosov mapping classes are not periodic and not reducible.
In particular, we know that a pseudo-Anosov mapping class contains a representative which is a pseudo-Anosov homeomorphism, i.e. there is a number $\lambda > 0$ and a pair of transverse measured foliations (which we will refer to as the vertical and horizontal foliations) which are preserved and whose transverse measures are multiplied by $\lambda$ respectively $\lambda^{-1}$.
We call such a homeomorphism a \emph{Thurston representative} of its mapping class.
Note that the definition of a pseudo-Anosov homeomorphism does not depend on the orientability of the surface.
In \cite{wu}, a completely analogous theorem to the Nielsen-Thurston classification is proven for compact non-orientable surfaces.

\subsection{Quasi-morphisms and commutator length}
\paragraph{Definitions.}
For any group $G$, one can consider maps $\phi\colon G\rightarrow \RR$ with the following property:
for two elements $g$, $h$ of $G$, the map $\phi$ behaves like a homomorphism, but with a uniformly bounded error, i.e. $\lvert \phi(gh) - \phi(g) - \phi(h)\rvert$ is bounded by some $D>0$ which is independent of $g$ and $h$.
We then call $\phi$ a \emph{quasi-morphism}, and the smallest possible bound $D$ is called the \emph{defect} of $\phi$.
We say that a quasi-morphism is trivial if it is a homomorphism of groups, i.e. the defect can be chosen to be 0.
Further, we call quasi-morphisms \emph{homogeneous} if they restrict to homomorphisms on cyclic subgroups, i.e. if for any $k\in\ZZ$ and $g\in G$ we have $\phi(g^k) = k\cdot \phi(g)$.

\paragraph{Bavard duality.}
It is known that the groups $\operatorname{Homeo}_0(M)$ of compact manifolds are perfect \cite{anderson}, i.e. any element of the group can be written as a product of commutators, and the same is true for $\operatorname{Diff}_0$, see e.g. \cite{mann}.
The minimal number of commutators that appear in such a factorisation of a homeomorphism is called \emph{commutator length}, usually denoted by cl, and it is an important example of a conjugation-invariant norm on the group $\operatorname{Homeo}(M)$ as defined in \cite{BIP}.
Such norms have a so-called \emph{stabilisation}, which takes the following form for commutator length:
\[ \operatorname{scl}(g) \coloneqq \lim_{n\rightarrow \infty} \frac{\operatorname{cl}(g^n)}{n} \]
for $g\in \operatorname{Homeo}(M)$.
We call a group \emph{stably unbounded} if the stabilisation of some norm takes positive values.

An important connection between quasi-morphisms and scl is provided by the following theorem:
\begin{theorem}[Bavard duality, \cite{bavard}]\label{thm:bavard-duality}
    For all $g\in \operatorname{Homeo}(M)$, we have
    \[\operatorname{scl}(g) = \sup_\phi \frac{\lvert \phi(g)\rvert}{2D(\phi)}\]
    where we take the supremum over all non-trivial homogeneous quasi-morphisms.
\end{theorem}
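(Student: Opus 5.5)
Bavard duality is a classical result, so the plan is to recall the standard argument rather than build anything new; the statement does not depend on the group being $\operatorname{Homeo}(M)$. The proof I have in mind works for any group $G$ and any $g\in[G,G]$. Here every element qualifies, since $\operatorname{Homeo}_0(M)$ is perfect; note that the statement should really be read in $\operatorname{Homeo}_0(M)$, where $\operatorname{cl}$ is defined. There are two inequalities to prove. One is elementary. The other requires constructing a quasi-morphism, and I would do that through a duality (Hahn--Banach) argument.

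\emph{Lower bound.} Let $\phi$ be a homogeneous quasi-morphism with defect $D$. First show that $\phi$ is conjugation invariant: $\phi(hgh^{-1})$ differs from $\phi(g)$ by at most a bounded amount, and homogeneity upgrades this to equality by passing to powers. Next, bound $\phi$ on commutators. One route is $\lvert\phi([a,b])\rvert\le D$, which uses $\phi(ab a^{-1})=\phi(b)$ and the sharper homogeneous estimate. A cruder bound is also fine if one tracks the constants carefully. From this, a product of $n$ commutators has $\lvert\phi\rvert \le (2n-1)D$. Applying this to $g^m$ with $\operatorname{cl}(g^m)=n_m$ gives $m\lvert\phi(g)\rvert\le (2n_m-1)D$. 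Dividing by $m$ and letting $m\to\infty$ yields $\lvert\phi(g)\rvert\le 2D\operatorname{scl}(g)$.

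\emph{Upper bound.} This is the main obstacle. I would follow Bavard's original route, or equivalently Calegari's version via the space $B_1^H(G)$. Consider the space of real group $1$-chains modulo the relations $g^n\sim ng$ and $hgh^{-1}\sim g$, and extend $\operatorname{scl}$ to a seminorm on the boundaries. For $\operatorname{scl}$ on chains I would use the admissible-surface description: $\operatorname{scl}(g)=\inf \frac{-\chi^-(S)}{2n(S)}$ over surfaces $S$ whose boundary wraps $n(S)$ times around $g$. Establishing this formula is the key technical lemma. The proof is a direct topological argument: a product of $n$ commutators equal to $g^m$ gives a genus-$n$ surface with one boundary component. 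Conversely, a surface with several boundary components can be glued and reduced to a single boundary component with a controlled loss of Euler characteristic, and this loss vanishes after stabilising.

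Once the formula holds, $\operatorname{scl}$ is a seminorm on a real vector space, and $g$ represents a vector $v$ with $\operatorname{scl}(v)>0$ (if $\operatorname{scl}(g)=0$ there is nothing to show). Hahn--Banach then gives a linear functional $\ell$ with $\ell(v)=\operatorname{scl}(v)$ and $\lvert\ell\rvert\le\operatorname{scl}$. Dualising, $\ell$ corresponds to a homogeneous quasi-morphism. To see this, define $\phi(h)=\ell(h)$ on the chain level; the bound $\lvert\ell\rvert\le \operatorname{scl}$ applied to the chain $h_1+h_2-h_1h_2$, which bounds a pair of pants and so has $\operatorname{scl}\le 1/2$, gives defect $D(\phi)\le 1$ after rescaling conventions. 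The factor $2$ must be tracked at this step.

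There is a subtlety: $\ell$ is defined only on boundaries, so it must be extended to a functional on all chains. The extension should be a quasi-morphism on all of $G$ that is still homogeneous. Since $G$ is perfect, every element is a boundary, so here this step is essentially automatic. The result is $\lvert\phi(g)\rvert/(2D(\phi))\ge \operatorname{scl}(g)$, which completes the duality.
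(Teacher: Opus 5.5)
The paper gives no proof of this statement; it is quoted from \cite{bavard}. Where the paper invokes it (the end of the proof of Theorem~\ref{thm:main-thing}), only the elementary inequality $\operatorname{scl}(g)\ge\lvert\phi(g)\rvert/2D(\phi)$ is needed: it turns the quasi-morphism from \cite{Bestvina} that is positive on $F$ into positive scl.

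Your first half proves exactly this inequality, and correctly. The commutator bound is even simpler than you suggest. Since $[a,b]=(aba^{-1})b^{-1}$ and $\phi(aba^{-1})+\phi(b^{-1})=0$, you get $\lvert\phi([a,b])\rvert\le D$ straight from the definition of the defect, with no sharper estimate required.

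Your second half reproves the hard inequality by the standard chain-level route: scl as a seminorm on $B_1^H$ via admissible surfaces, then Hahn--Banach. This gives a self-contained proof of full duality. The price is the technical lemma you single out, plus the facts that scl is well defined on $B_1^H$, subadditive, and extends from rational to real chains. The paper needs none of this.

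Two details in that half should be fixed. First, the pants estimate gives $\lvert\ell(h_1+h_2-h_1h_2)\rvert\le\frac12$, hence $D(\phi)\le\frac12$. That is exactly the constant needed for $\phi(g)/2D(\phi)\ge\operatorname{scl}(g)$. With $D(\phi)\le 1$ you would lose a factor $2$, and no rescaling is involved. Second, extending $\ell$ from $B_1^H$ to $C_1^H$ is automatic for every group, not only perfect ones. Any linear extension works: homogeneity and conjugation invariance of $\phi$ come from the relations in $C_1^H$, and the defect only involves the chains $h_1+h_2-h_1h_2$, which are always boundaries.

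Also note that $\phi$ is non-trivial, as the supremum requires, because it is positive on $g\in[G,G]$. Your caveat that the statement should be read for $g\in[G,G]$ (for example in $\operatorname{Homeo}_0(M)$) is appropriate.
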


\subsection{Curve graphs and variations}
Curve graphs and curve complexes were defined by Harvey in \cite{Harvey}.
These are usually defined in terms of essential curves, but for our purposes it will be useful to only work with non-separating curves.
The main definition here is:
the \emph{non-separating curve graph} $\ncg(M)$ of a 2-manifold (possibly with boundary) $M$ is the graph with: \begin{itemize}
    \item vertices corresponding to the homotopy classes of non-separating closed curves in $M$
    \item an edge between two vertices, if the corresponding homotopy classes have disjoint representatives
\end{itemize}
To study homeomorphisms that are isotopic to the identity, we have to get rid of the homotopy classes:
the \emph{fine non-separating curve graph} accomplishes this.
It was introduced in \cite{BHW1} and is defined by:\begin{itemize}
    \item vertices which each represent a non-separating closed curve in $M$
    \item an edge between two vertices, if the corresponding curves are disjoint
\end{itemize}

In the case of manifolds with (non-orientable) genus less than 2, we have to modify these definitions slightly and allow for up to one intersection point between two representatives or curves respectively.
We also remark that in \cite{kuno-hyperbolicity}, the main theorem is proved for a variation of the definition allowing for up to two intersection points, which is shown to be quasi-isometric to the curve graphs we defined.

We also define a coarse notion of convexity:
a subset $Y\subseteq X$ of a geodesic metric space is \emph{$K$-quasi-convex} for $K>0$, if for any two points in $Y$ any geodesic between them lies in a $K$-neighbourhood of $Y$.

As the condition for an edge between two vertices is defined in terms of intersection points of curves, it is natural to consider the \emph{geometric intersection number} of two curves $\alpha$, $\beta$, which we define as
    \[ i(\alpha,\beta) \coloneqq \lvert \alpha \cup \beta\rvert.\]
For the corresponding homotopy classes, we take the minimum over all possible representatives.
For details cf. section 1.2.3 in \cite{primer}.

These intersection numbers can be useful to find upper bounds on distances in curve graphs, one example is the following lemma due to \cite{masur-minsky}:
\begin{lemma}[cf. Lemma 2.1 in \cite{masur-minsky} and Lemma 4.3 in \cite{kuno-hyperbolicity}]\label{lem:upper-bound-intersection-no}
    For any two non-separating curves $\alpha$, $\beta$,
    \[d_{\ncg}(\alpha,\beta) \leq 2i(\alpha,\beta) +1 .\]
\end{lemma}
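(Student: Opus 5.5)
The plan is to construct, from $\beta$ and a geodesic-like sequence of surgeries, an explicit path in $\ncg(M)$ from $\alpha$ to $\beta$. I will argue by induction on $n = i(\alpha,\beta)$; the target bound $2n+1$ means each intersection point should cost at most two edges, plus one extra edge at the end. For the base case, if $i(\alpha,\beta)=0$ (or $\leq 1$ in the low-genus convention where one intersection is allowed), then $\alpha$ and $\beta$ are adjacent or equal, so $d_{\ncg}(\alpha,\beta)\leq 1$.

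For the inductive step, assume $\alpha$ and $\beta$ are in minimal position with $n=i(\alpha,\beta)\geq 1$ (respectively $\geq 2$ in genus below $2$). I pick two intersection points $x,y$ that are consecutive along $\beta$, and let $b\subset\beta$ be the arc between them, whose interior misses $\alpha$. The endpoints of $b$ cut $\alpha$ into two arcs $a_1,a_2$, giving two curves $\gamma_j = a_j\cup b$, each slightly pushed off so that it is disjoint from $\alpha$ or meets it once. Each $\gamma_j$ meets $\beta$ in fewer than $n$ points, indeed at most $n-1$ after a small isotopy off $b$, because $b$ contributes no interior intersections and the endpoints are resolved. Running the induction on the pair $(\gamma_j,\beta)$ then gives
\[ d(\alpha,\beta) \leq d(\alpha,\gamma_j)+d(\gamma_j,\beta)\leq 2 + 2(n-1)+1 = 2n+1, \]
provided $d(\alpha,\gamma_j)\leq 2$. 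In fact this distance is $1$ when the push-off is disjoint from or meets $\alpha$ once, which leaves room to spare.

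I expect the main obstacle to be ensuring that at least one $\gamma_j$ is non-separating, and that it is essential and not homotopic to a boundary component. I will handle this with a homology argument. Working in $H_1(M;\ZZ/2)$, we have $[\gamma_1]+[\gamma_2]=[\alpha]\neq 0$, since $\alpha$ is non-separating and a curve is non-separating iff its mod-$2$ class (relative to the boundary) is nonzero, or iff its complement is connected. Hence $\gamma_1$ and $\gamma_2$ cannot both be zero, so one of them is non-separating. Two further points need care. First, in the one-sided case the two orientations of traversal at $x$ and $y$ may not match, and the surgered curve $a_j\cup b$ may need to run as $a_j\cup b\cup a_j'$; in that case I allow the curve to pass once through $\alpha$, which is permitted by the modified edge definition in genus $<2$. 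Second, minimal position must be restored before applying induction, which only decreases intersection numbers.

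If the direct surgery bookkeeping becomes messy, I will fall back on citing the argument of Lemma 2.1 in \cite{masur-minsky}. Their argument gives exactly this count for essential curves, and the only new input needed here is the homology observation that a non-separating surgery curve always exists.
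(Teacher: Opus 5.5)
You are following the Masur--Minsky surgery induction that the paper simply cites, and two of your ingredients are sound. First, $[\gamma_1]+[\gamma_2]=[\alpha]\neq 0$ in $H_1(M,\partial M;\ZZ/2)$ does force some $\gamma_j$ to be non-separating, and hence automatically essential and non-peripheral. Second, the count holds: if $n_1+n_2=n-2$ are the interior points of $\alpha\cap\beta$ on $a_1,a_2$, then $\lvert\gamma_j\cap\beta\rvert\le n_j+1\le n-1$, where the $+1$ comes from pushing $b$ off $\beta$.

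\textbf{The gap is the estimate $d(\alpha,\gamma_j)\le 2$.} You assert this distance is $1$ when $\gamma_j$ meets $\alpha$ once, but that holds only for the modified edge relation used in genus $<2$. Under the ordinary disjointness relation, a single intersection gives distance at least $2$. That relation is the one relevant in the resolving cover $\Sigma\setminus\hat P$, where the paper actually applies the lemma, once $\Sigma$ has genus $\ge 2$. Your count $2+2(n-1)+1$ then uses $d(\alpha,\gamma_j)\le 2$ with no room to spare, and this requires a non-separating curve disjoint from $\alpha\cup\gamma_j$.

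This is the one point where the topology of the surface enters, and it is not automatic. Take a punctured Klein bottle with the disjointness relation, and let $a,b$ be the mod-$2$ classes of the cores of the two M\"obius bands. A two-sided non-separating curve has class $a+b$, and it can only be disjoint from non-separating curves of that same class. So it lies in a different component of the graph from any one-sided curve it meets once. Falling back on Masur--Minsky does not settle this either: in the full curve complex one may use the boundary of a regular neighbourhood of $\alpha\cup\gamma_j$, but in the orientable case that bounds a one-holed torus and is separating. You must supply the curve yourself. For orientable genus $g\ge 2$, the complement of that one-holed torus has genus $g-1\ge 1$, so it contains a curve that is non-separating in the complement and hence in the whole surface.

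Separately, your worry about one-sided curves is unnecessary, and the curve $a_j\cup b\cup a_j'$ would spoil both the homology identity and the count. Since $b$ is an arc, its sides are globally defined. One-sidedness of $\alpha$ only means that exactly one of $\gamma_1,\gamma_2$ can be pushed off $\alpha$, while the other must cross it once. In $\rp\setminus P$ the disjoint one is always separating, because its class in $H_1(\rp;\ZZ/2)$ pairs trivially with $[\alpha]\neq 0$. So there you are always forced to use the $\gamma_j$ that meets $\alpha$ once. That is exactly where the modified edge relation is needed, and there your argument does go through.
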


Lower bounds are harder to find, but the \emph{covering criterion} of \cite{BHMMW} provides one, following the strategy of Hempel \cite{hempel}.
This is phrased in terms of \emph{elevations} instead of lifts, which are connected components of the preimages of a curve under a (branched) covering map.
For our purposes, we need to modify their statement in the following way:

\begin{lemma}[cf. Lemma 6.4 of \cite{BHMMW}]\label{lem:covering-criterion}
    Let $K\geq 0$.
    There exists a finite-sheeted branched covering $M \rightarrow N$ (depending only on $K$) such that any two curves $\alpha$ and $\beta$ with $d_{\fncg(N)}(\alpha,\beta)\leq K$ which are disjoint from the branching locus have disjoint elevations to $M$.
\end{lemma}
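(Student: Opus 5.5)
We adapt the Hempel-style argument behind Lemma 6.4 of \cite{BHMMW}: build a finite-sheeted branched cover in which every curve of length at most $K$ in the curve graph of the base lifts with disjoint elevations. We proceed by induction on $K$, or more directly by composing finitely many covers.

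The basic step is a cover $M_1 \to N$ such that any two \emph{disjoint} curves $\alpha,\beta$ in $N$ (or curves meeting at most once, in the genus-one setting), each avoiding the branching locus, have disjoint elevations. This is automatic when $\alpha\cap\beta=\emptyset$, since preimages of disjoint sets are disjoint. The real content is the inductive step. Suppose $d_{\fncg(N)}(\alpha,\beta)\le K$ is realised by a path $\alpha=\gamma_0,\gamma_1,\dots,\gamma_K=\beta$. The intermediate curves $\gamma_j$ may pass through the branching locus $P$. We first perturb each $\gamma_j$ off the finite set $P$ by a small isotopy. This keeps consecutive curves disjoint, since disjointness is an open condition and the curves are compact. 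It also keeps consecutive curves meeting at most once, if the local move is chosen carefully near the one intersection point. So we may assume the whole path avoids $P$.

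Next we pass to a characteristic (regular) finite cover $\pi_1\colon M_1\to N$, branched over $P$, in which every non-separating curve has elevations that are non-separating but whose projection degree is at least $2$. Hempel's trick is as follows. Take the characteristic subgroup of $\pi_1(N\setminus P)$ given by the kernel of the map to $H_1(\,\cdot\,;\ZZ/2)$, made to include branching so that it is a genuine branched cover. In this cover, if $\gamma_{j}$ and $\gamma_{j+1}$ are disjoint, each elevation $\tilde\gamma_{j+1}$ is disjoint from every elevation of $\gamma_j$. Moreover, the elevations of a non-separating curve are themselves non-separating, so consecutive elevations are adjacent in $\fncg(M_1)$. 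Distance in the upstairs graph is then bounded by distance downstairs. More importantly, the \emph{subsurface} complementary to elevations grows. Iterating $K$ times, we take $M = M_K\to\cdots\to M_1\to N$, and the composite is again finite-sheeted and branched. At each level a curve at distance $j$ has an elevation disjoint from some elevation of a curve at distance $j-1$. After $K$ levels, the elevations of $\alpha$ and $\beta$ are forced apart because the intersections are ``unwound'' step by step.

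Since the combinatorics of unwinding is exactly Lemma 6.4 of \cite{BHMMW}, the most economical route is to deduce our statement from it. Apply their lemma to the orientable double cover, or directly to the punctured surface $N\setminus P$, which has enough topology for their hypotheses. We check two things. First, their fine-curve-graph distance dominates ours up to the modification allowing one intersection. Two curves meeting once can be connected through one further curve disjoint from both, after pushing off slightly, so $d$ changes by at most a factor $2$; we replace $K$ by $2K$. Second, branched covers of $N\setminus P$ with finite monodromy around punctures extend to branched covers of $N$.

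The main obstacle is this comparison step for genus $1$. In $\rp$ or the Möbius strip, two one-sided curves need not admit a disjoint non-separating curve near them. So the intermediate curve must be found in the punctured surface $N\setminus P$, where non-separating curves are more plentiful, and then verified to remain non-separating in $N$. If this fails, I would instead run Hempel's induction directly, as sketched in the second paragraph, using $\ZZ/2$-homology covers.
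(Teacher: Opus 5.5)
Your plan of redoing the Hempel-type argument of \cite{BHMMW} is the route the paper has in mind, but the steps you give fail in exactly the genus-one setting. In $\rp$ and the Möbius strip every non-separating curve is one-sided. Two one-sided curves have $\mathbb{Z}/2$-intersection number $1$, so they always meet. Hence every edge of $\fncg(N)$ joins two curves meeting in exactly one point. The disjoint case is the only one your base step and inductive step actually treat (``automatic when $\alpha\cap\beta=\emptyset$'', ``if $\gamma_j$ and $\gamma_{j+1}$ are disjoint''), and it never occurs. The once-intersecting case is not automatic: in $S^2\to\rp$ the elevations of two lines meeting once are great circles meeting twice. In general, whether some elevation of $\alpha$ misses some elevation of $\beta$ depends on the monodromy orbits on the fibre over $\alpha\cap\beta$, so the cover must be chosen for this purpose. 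The same fact defeats your reduction to \cite{BHMMW}:
no non-separating curve is disjoint from a given one, so the ``replace $K$ by $2K$'' comparison is impossible. Passing to $N\setminus P$ gives nothing new, since a curve avoiding $P$ is non-separating in $N\setminus P$ iff it is so in $N$. And the orientable double covers of $N$ are $S^2$ and an annulus.

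The second gap is your claim that elevations of non-separating curves are non-separating, which your induction needs, since a dual double cover of the middle curve exists only if that curve is non-separating. For orientable surfaces this follows from $p_*[\hat c]=\deg\cdot[c]\neq 0$ in $H_1(\,\cdot\,;\ZZ)$. One-sided curves, however, are $2$-torsion and your elevations have degree $2$. Indeed the $\mathbb{Z}/2$-homology cover of $\rp$ minus one point is $S^2\to\rp$, where the elevation of a line separates. Moreover, you never state the step that actually shortens a path: in the double cover dual to $\gamma_1$, lift $\gamma_0$ and $\gamma_2$ to different sheets of the complement of the preimage of $\gamma_1$; the $\mathbb{Z}/2$-homology cover dominates all such dual covers, so the result depends only on $K$. ``The intersections are unwound step by step'' is not an argument.

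What is missing is an initial fixed cover that turns once-intersecting edges into disjoint non-separating elevations. For $\rp$, for instance, take the cover branched over three points $P$ induced by $\pi_1(\rp\setminus P)\to H_1(\rp\setminus P;\mathbb{Z}/2)\cong(\mathbb{Z}/2)^3$. This is a closed orientable surface of genus $3$, and one-sided curves have four elevations of degree $2$. So an elevation of $\gamma_j$ passes through at most two of the eight points over $\gamma_j\cap\gamma_{j+1}$, and some elevation of $\gamma_{j+1}$ is disjoint from it. No such elevation separates: the orientation-reversing deck involution preserving it swaps its two sides, which would split a genus-$3$ surface into two homeomorphic halves. Only after this step does Hempel's induction, with iterated unbranched $\mathbb{Z}/2$-homology covers, apply.
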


The proof is analogous to the proof in \cite{BHMMW}.

\subsection{Hyperbolicity}
We begin by defining hyperbolicity through the Gromov product:

\begin{definition}
Let $(X,d)$ be a metric space, and $x,y,w\in X$.
The \emph{Gromov product} is defined as:
\begin{equation*}
    ( x\cdot y)_w \coloneqq \frac{1}{2}\big( d(w,x) + d(w,y) - d(x,y)  \big).
\end{equation*}
We say that $X$ is $\delta$-hyperbolic, if 
\[\forall w,x,y,z \in X\colon ( x\cdot z )_w \geq \min\left\{ ( x\cdot y)_w, (y\cdot z)_w \right\} - \delta.\]
\end{definition}

We can use this framework to describe a ``boundary at infinity" of $(X,d)$:
fix $w$ as the base point in $X$.
We call a sequence $(a_i)$ in $X$ \emph{admissible}, if for $i,j \rightarrow \infty$, we have
\[(a_i\cdot a_j)_w \rightarrow \infty.\]
and we call two admissible sequences $(a_i)$ and $(b_j)$ equivalent if for $i,j\rightarrow \infty$, we have
\[(a_i\cdot b_j)_w \rightarrow \infty.\]
We can then define the \emph{Gromov boundary} $\partial_\infty X$ of $X$ as the set of equivalence classes of admissible sequences in $X$.
One can extend the Gromov products to boundary points $\tau, \upsilon \in \partial_\infty X$:
\[(\tau\cdot \upsilon)_w \coloneqq \inf \left\{\left. \liminf_{i,j\rightarrow \infty} (a_i\cdot b_j)_w \,\right|\, (a_i)\in \tau,\, (b_j)\in \upsilon \right\}\]

We will also need a way of proving that a space is $\delta$-hyperbolic:
the following is a well-established technique, we cite Bowditch's version \cite{Bowditch_unif-hyp}.

\begin{theorem}[Guessing geodesics]\label{thm:bowditch-guessing-geodesics}
    Let $X$ be a graph and $D>0$ a number.
    Suppose that for each pair of distinct vertices $x,y$ of $X$ we have chosen a connected subgraph $P(x,y)$ containing $x$ and $y$.
    Suppose that if $d_X(x,y) = 1$, the diameter of $P(x,y)$ is at most $D$, and in addition we have for all pairwise distinct $x,y,z$:
    \[ P(x,z) \subseteq N_D(P(x,y)\cup P(y,z)). \]
    Then $X$ is $\delta(D)$-hyperbolic, and each $P(x,y)$ is $B(D)$-Hausdorff close to a geodesic joining $x$ to $y$.
\end{theorem}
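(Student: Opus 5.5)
The plan is to follow Bowditch's argument, which has three stages: a logarithmic estimate, an upgrade to a uniform estimate, and then slim triangles. Since the statement is quoted from \cite{Bowditch_unif-hyp}, one could simply cite it, but here is how I would reprove it.

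\textbf{Logarithmic estimate.} Let $\alpha$ be an edge path from $x$ to $y$ of length $n$, with vertices $x=v_0,\dots,v_n=y$. I will show by induction on $n$ that $P(x,y)\subseteq N_{D(\lceil\log_2 n\rceil+1)}(\alpha)$. For $n=1$ this holds because $P(x,y)$ has diameter at most $D$. For the induction step, split $\alpha$ at the midpoint vertex $m=v_{\lfloor n/2\rfloor}$. The hypothesis gives $P(x,y)\subseteq N_D(P(x,m)\cup P(m,y))$, and induction applies to the two halves; each application of the hypothesis costs at most an additive $D$. (If $m$ coincides with $x$ or $y$, or the three points are not distinct, the step is trivial.)

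The same halving gives a refined version. If $\alpha$ avoids the ball $B(p,R)$ around some $p\in P(x,y)$, then $n\geq 2^{(R-cD)/D}$ for a fixed constant $c$. In other words, paths that avoid a large ball around a point of $P(x,y)$ must be exponentially long.

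\textbf{Uniform closeness to geodesics.} Let $\gamma$ be a geodesic from $x$ to $y$, and let $p\in P(x,y)$ realise the maximal distance $R$ to $\gamma$. Such a $p$ exists, with $R\le D\log_2 d(x,y)+2D$, by the logarithmic estimate.

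Let $q\in\gamma$ be a closest point to $p$. Take points $a,b\in\gamma$ at distance about $2R$ on either side of $q$, or the endpoints of $\gamma$ if $\gamma$ is shorter. By maximality, $a$ and $b$ are within $R$ of points $a',b'\in P(x,y)$. The key step is to build a path from $x$ to $y$: follow $\gamma$ to $a$, jump to $a'$, continue to $b'$ and then to $b$, and finish along $\gamma$. The middle portion from $a'$ to $b'$ needs care; I would control it by a second application of the halving argument to $P(a',b')$ versus $P(x,y)$. The resulting path has length $O(R)$ near $p$ and stays outside $B(p,R/2-O(D))$. Comparing with the exponential lower bound from the first stage gives $2^{R/(2D)-c'}\lesssim R$, which forces $R\le B_1(D)$.

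For the converse inclusion $\gamma\subseteq N_{B_2}(P(x,y))$, I use connectedness of $P(x,y)$. Map each vertex of $P(x,y)$ to a nearest point of $\gamma$. Adjacent vertices map to points at distance at most $2B_1+1$ apart. The image contains points near $x$ and near $y$, so it is $(2B_1+1)$-dense along $\gamma$. This gives Hausdorff distance at most $B(D)$.

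\textbf{Hyperbolicity.} Consider a geodesic triangle with vertices $x,y,z$. Each side is $B$-close to the corresponding $P$. The hypothesis says $P(x,z)\subseteq N_D(P(x,y)\cup P(y,z))$, so the geodesic triangle is $(2B+D)$-slim. Slim triangles are equivalent to the Gromov product definition, with $\delta=\delta(D)$.

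The main obstacle is the second stage: constructing a detour path of length linear in $R$ that stays outside a ball of radius comparable to $R$ around $p$. This requires choosing $a,b$ carefully and controlling the piece of the path through $P(x,y)$, and that is where I would spend the most effort.
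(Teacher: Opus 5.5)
The paper does not prove this statement; it quotes it from Bowditch. Your first stage, the converse-inclusion argument and the slim-triangle stage are fine. The central step, the uniform bound $P(x,y)\subseteq N_{B_1}(\gamma)$, does not go through as proposed. Your exponential estimate constrains the \emph{total} length of a path from $x$ to $y$ that avoids a ball around $p$, and any such path has length at least $d(x,y)$. So the detour path you describe (along $\gamma$ to $a$, over to $a'$, $b'$, $b$, then along $\gamma$) only yields $2^{R/(2D)-c'}\lesssim d(x,y)+O(R)$. That is again $R\lesssim D\log_2 d(x,y)$, not a bound independent of $d(x,y)$.

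To get a uniform bound, you must apply the logarithmic estimate to $P(a,b)$ for some $a,b\in\gamma$ at distance $O(R)$ from $q$. For that you need $p$ to be close to $P(a,b)$. The hypothesis only puts $p$ within $2D$ of $P(x,a)\cup P(a,b)\cup P(b,y)$, and nothing in your plan excludes the two outer pieces. Your proposed ``second application of the halving argument to $P(a',b')$ versus $P(x,y)$'' runs into exactly the same obstruction.

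Two smaller points. The detour is not needed at all: since $R=d(p,\gamma)$, the segment of $\gamma$ between $a$ and $b$ already misses the open ball $B(p,R)$. You have imported the Morse-lemma template with the roles of the geodesic and the guessed path reversed. Also, maximality of $R$ gives $P(x,y)\subseteq N_R(\gamma)$, not $\gamma\subseteq N_R(P(x,y))$; connectedness only puts $a',b'$ within about $2R+1$ of $a,b$.

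The missing idea is a strong induction on $d(x,y)$.
\begin{enumerate}
\item Fix $m\ge 2D$ such that $r-2D> D(\lceil\log_2 6r\rceil+1)$ for every $r>m$.
\item Assume $P(u,v)\subseteq N_m(\gamma')$ for every geodesic $\gamma'$ from $u$ to $v$ whenever $d(u,v)<d(x,y)$. The case $d(u,v)=1$ is the diameter hypothesis.
\item Suppose some $p\in P(x,y)$ has $r=d(p,\gamma)>m$. Choose $a,b\in\gamma$ at distance $3r$ from $q$ on either side, or the endpoints if $\gamma$ is shorter. Then $p$ is within $2D$ of $P(x,a)\cup P(a,b)\cup P(b,y)$.
\item By induction, $P(x,a)$ and $P(b,y)$ lie within $m$ of the subsegments $[x,a]$ and $[b,y]$ of $\gamma$. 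These are at distance at least $2r$ from $p$. So $p$ being close to either would force $2r\le m+2D$, which is impossible since $r>m\ge 2D$.
\item Hence $p$ is within $2D$ of $P(a,b)$. Apply your first stage to the subsegment $[a,b]$, which has length at most $6r$ and is disjoint from the open ball $B(p,r)$. This gives $r-2D\le D(\lceil\log_2 6r\rceil+1)$, contradicting the choice of $m$.
\end{enumerate}
With $B_1=m$, the rest of your argument then goes through.
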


\subsection{Actions on curve graphs}\label{subsec:actions-on-cg}
One can easily check that the groups $\operatorname{Homeo}_0$ and $\operatorname{Mcg}$ of a surface act on the fine curve graph respectively curve graph of that surface by isometries.

In \cite{Bestvina}, Bestvina and Fujiwara introduced the notion of weak proper discontinuity, which will be the key to finding elements with positive stable commutator length.

First, recall that for two hyperbolic isometries $f$ and $g$ of a $\delta$-hyperbolic space $(X,d)$, we can find invariant quasi-axes $\alpha$ and $\beta$ of quality $L$ respectively $L'$ by taking the orbit of some point, e.g. $\alpha = (a_k) \coloneqq f^k(x_0)$ for $k\in\ZZ$ and $x_0\in X$, and analogously for $\beta = (b_k)$.
We then say that $f$ and $g$ (respectively their axes) are \emph{quasi-equivalent}, if there exists a constant $B = B(L,L',\delta)$ such that:
for any $D>0$, there is an isometry $h$ that sends a segment of length $D$ in $\alpha$ into a $B$-neighbourhood of $\beta$.
If $f$ is quasi-equivalent to its inverse, then we say that $f$ is \emph{quasi-invertible}.

These quasi-geodesics define points in the Gromov boundary of $X$, and we can give an alternative definition of quasi-equivalence in terms of boundary points:
let $\tau_\pm \coloneqq [a_{\pm k}]$ and $\upsilon_\pm \coloneqq [b_{\pm k}]$ as equivalence classes of admissible sequences as in the previous section.
Then $f$ and $g$ (respectively their axes) are quasi-equivalent, if there exists a sequence $h_k$ of isometries of $X$ such that $\lim_{k\rightarrow \infty} h_k(\tau_\pm) = \upsilon_\pm$.

Further, we call two hyperbolic elements \emph{dependent}, if they have quasi-axes which contain rays that are in finite distance of each other, and \emph{independent} otherwise.
If $G$ contains two independent hyperbolic elements, we call the action \emph{non-elementary}.

The central definition in this section is:

\begin{definition}
    Let $G$ be a group acting on the hyperbolic metric space $X$.
    This action is \emph{weakly properly discontinuous (WPD)}, if the following holds: \begin{itemize}
        \item $G$ is not virtually cyclic
        \item $\exists g\in G\colon$ $g$ acts on $X$ as a hyperbolic isometry
        \item $ \forall x\in X\colon \forall g \in G \textrm{ acting hyperbolically}\colon \forall C>0\colon \exists N>0\colon \{\gamma \in G \mid d(x,\gamma(x)) \leq C, d(g^N(x),\gamma(g^N(x)))\leq C \} \textrm{ is finite.} $
    \end{itemize}
    \end{definition}
Note that when we restrict an action $G\curvearrowright X$ to a subgroup $H<G$, we only need to recheck the first two conditions to see that the induced action of $H$ on $X$ satisfies WPD.
The following proposition shows why WPD is a useful property:

\begin{proposition}[Proposition 6 of \cite{Bestvina}]\label{prop:useful-wpd}
    Suppose that $G$ and $X$ satisfy WPD.
    Then \begin{enumerate}
        \item for every hyperbolic $g\in G$, the centraliser $C(g)$ is virtually cyclic,
        \item for every hyperbolic $g\in G$ and every $(K,L)$-quasi-axis $l$ for $g$, there is a constant $M$ depending on $g$, $K$ and $L$ such that if two translates $l_1$ and $l_2$ of $l$ contain (oriented) segments of length $>M$ that are oriented $B(K,L,\delta)$-close, then $l_1$ and $l_2$ are oriented $B(K,L,\delta)$-close and moreover the corresponding conjugates $g_1$ and $g_2$ of $g$ have positive powers which are equal,
        \item the action of $G$ on $X$ is non-elementary,
        \item two elements of $G$ are quasi-equivalent if and only if they have positive powers which are conjugate,
        \item there exist two hyperbolic elements in $G$ which are not quasi-equivalent.
    \end{enumerate}
\end{proposition}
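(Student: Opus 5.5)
The plan is to derive all five items from the WPD finiteness condition by one repeated argument. \emph{Key mechanism:} whenever many group elements move two far-apart points of a quasi-axis by a bounded amount, WPD says there are only finitely many such elements. A pigeonhole argument then produces an identity between powers.

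Fix a hyperbolic $g$, a base point $x$ and the quasi-axis $l=(g^k x)_k$. Let $\tau$ denote the translation length of $g$.

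\textbf{Item 1.} For $h\in C(g)$ we have $d(g^n x, h g^n x)=d(g^n x, g^n h x)=d(x,hx)$ for all $n$, so $h$ moves the whole axis by the same bounded amount. Composing $h$ with a suitable power of $g$, we may assume $d(x,hx)\le C_0$, where $C_0$ depends only on $\tau$ and the quality of $l$. Then $d(g^N x, hg^N x)\le C_0$ as well, and WPD with $C=C_0$ leaves only finitely many such $h$. Hence $\langle g\rangle$ has finite index in $C(g)$.

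\textbf{Item 2.} This is the core of the proof, and I expect it to be the main obstacle. Let $l_1=a l$ and $l_2=b l$, with $g_1=aga^{-1}$ and $g_2=bgb^{-1}$, and suppose $l_1,l_2$ contain oriented $B$-close segments of length $>M$. For indices $0\le i\le m$ with $m$ large, I would consider the elements
\[ \gamma_i = g_2^{-i} g_1^{\,i} \]
(shifting indices so both translations act along the common segment). Since the segments fellow-travel, each $\gamma_i$ moves a chosen point $y$ near the start of the segment, and also the point $g_1^N y$, by at most a constant $C$ depending on $B$, $\delta$ and the quasi-axis constants. The point to verify carefully is that the error stays uniformly bounded; I would use that the two translation lengths agree, since both are conjugates of $g$. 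Choosing $M$ larger than roughly $(N(C)+m)\cdot\tau$, where $N(C)$ is the WPD constant at $y$, we get more than $\#\{\gamma : \text{moves } y, g_1^Ny \text{ by}\le C\}$ many $\gamma_i$. By pigeonhole, $\gamma_i=\gamma_j$ for some $i\neq j$, which gives $g_1^{\,j-i}=g_2^{\,j-i}$. Equal powers have the same quasi-axis up to bounded distance, so $l_1$ and $l_2$ are globally oriented $B$-close.

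\textbf{Items 3 and 4.} For item 3, let $E(g)$ be the stabiliser of the pair of endpoints $\{g^{\pm}\}$ in $\partial_\infty X$. The argument of item 1, applied to elements moving the axis boundedly (possibly reversing it, which gives index two), shows $E(g)$ is virtually cyclic. Since $G$ is not virtually cyclic, pick $f\notin E(g)$. Suppose $fgf^{-1}$ and $g$ were dependent. Then their axes would share arbitrarily long close rays, and item 2 would give equal positive powers, forcing $f$ to fix $\{g^\pm\}$, a contradiction. Hence $g$ and $fgf^{-1}$ are independent. For item 4, if $f$ and $g$ have conjugate positive powers, quasi-equivalence is immediate. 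Conversely, the isometries $h$ carrying arbitrarily long segments of $\alpha$ near $\beta$ produce, via item 2 applied to the translate $h\alpha$ (for $h\in G$), equal positive powers of $hfh^{-1}$ and $g$.

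\textbf{Item 5.} Take independent $g_1,g_2$ from item 3 and consider $w=g_1^n g_2^n$ for $n$ large; a standard ping-pong/local-to-global argument shows $w$ is hyperbolic. Its axis alternates between long segments near translates of the $g_1$-axis and of the $g_2$-axis. Suppose every hyperbolic element were quasi-equivalent to $g_1$. Then $w$ would have a positive power conjugate to a power of $g_1$, so its axis would be a translate of the $g_1$-axis. Item 2 would then make that translate globally close to a $g_2$-type segment, which forces $g_1$ and $g_2$ to be dependent, a contradiction. So either $w$ or $g_2$ is not quasi-equivalent to $g_1$.
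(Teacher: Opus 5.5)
The paper gives no proof of this proposition; it is quoted from \cite{Bestvina}. Your overall mechanism, WPD finiteness plus a pigeonhole on elements of the form $g_2^{-i}g_1^{i}$, is essentially the argument used there. One step, however, does not go through as written: the converse direction of item 4. Item 2 only compares two translates of the \emph{same} quasi-axis of the \emph{same} element. Your own proof of it relies on $g_1,g_2$ being conjugate, so that they advance the same distance along the common segment and $g_2^{-i}g_1^{i}$ has uniformly bounded displacement. In item 4, $h\alpha$ is an axis of $hfh^{-1}$ while $\beta$ is an axis of $g$, a different element with possibly different translation length. So item 2 says nothing about the pair $(h\alpha,\beta)$, and $g^{-i}(hfh^{-1})^{i}$ can drift unboundedly. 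You need an extra idea. A clean fix is to take $h\in G$ with $h\alpha$ oriented-close to the $g$-invariant axis $\beta$ along a segment much longer than $M(f)+d(x,gx)$. Then $h\alpha$ and $gh\alpha$ are two translates of $\alpha$ that fellow-travel $\beta$ along long overlapping segments. Item 2 (for $f$) then gives $g\,(hf^{p}h^{-1})\,g^{-1}=hf^{p}h^{-1}$ for some $p>0$. Hence $g\in C(hf^{p}h^{-1})$, which is virtually cyclic by item 1. So $g^{q}=hf^{pr}h^{-1}$ with $q>0$, $r\neq 0$, and $r>0$ because the fellow-travelling is orientation preserving. Alternatively, redo the pigeonhole with $\gamma_i=g^{-j(i)}(hfh^{-1})^{i}$, where $j(i)$ is calibrated to the ratio of translation lengths and you check that $j(i+N)-j(i)-j(N)$ stays bounded.

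The same missing step appears in item 3. If $f\notin E(g)$ but $fg^{+}=g^{-}$ or $fg^{-}=g^{+}$, the asymptotic rays of $l$ and $fl$ are oppositely oriented, and your oriented item 2 does not apply. The trick above handles this case: $fl$ and $g\,fl$ are translates of $l$ with asymptotic, equally oriented rays. Hence $g$ centralises a positive power of $fgf^{-1}$, and item 1 gives $g^{q}=fg^{s}f^{-1}$ with $s\neq 0$, forcing $f\in E(g)$.

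Three smaller points also need repair. In item 2, $M$ must not depend on $l_1,l_2$, yet you take the WPD constant and the size of the finite set at $y$, which does depend on them. Conjugating by $(ag^{k})^{-1}$ moves $y$ near a fixed base point of $l$ and conjugates the relevant finite sets, which restores uniformity. In items 1 and 3, the reduction to $d(x,hx)\le C_0$ needs the Morse lemma: $hl$ is a quasi-axis of the same quality with the same endpoints as $l$. For $E(g)$, bounded displacement of $g^{N}x$ no longer comes from commuting; you must pass to the index-$\le 2$ subgroup fixing $g^{\pm}$ and use oriented fellow-travelling. In item 5, ``forces $g_1,g_2$ to be dependent'' needs item 2 applied twice. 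The axis of $w$ is globally close both to $l_1$ and to $g_1^{n}l_2=g_1^{n}fl_1$, which are both translates of $l_1$. So $l_1$ and $fl_1$ are globally close after applying $g_1^{-n}$, contradicting the independence of $g_1$ and $fg_1f^{-1}$.
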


The main result of \cite{Bestvina} is:
\begin{theorem}[Theorem 1 of \cite{Bestvina}]\label{thm:bestvina-fujiwara-main-thm}
Suppose a group $G$ acts on a $\delta$-hyperbolic graph $X$ by isometries.
Suppose also that the action is non-elementary and that there exist independent hyperbolic elements $g_1,g_2 \in G$ which are not quasi-equivalent.
Then, the space of non-trivial quasi-morphisms is infinite dimensional.
\end{theorem}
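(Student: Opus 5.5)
The approach follows Brooks' counting construction, transplanted to a $\delta$-hyperbolic graph as Bestvina and Fujiwara do. Fix a base vertex $x_0\in X$. For a finite oriented path $w$ in $X$ (a long segment of a quasi-axis), and for any path $\gamma$, let $|\gamma|_w$ be the maximal number of non-overlapping subpaths of $\gamma$ that are $G$-translates of $w$. For $W=|w|$ and $g\in G$, set
\[ c_w(g) \coloneqq d(x_0,gx_0) - \inf_{\gamma}\bigl(\operatorname{length}(\gamma) - W\,|\gamma|_w\bigr), \]
where the infimum runs over paths $\gamma$ from $x_0$ to $gx_0$. Then define $h_w\coloneqq c_w - c_{w^{-1}}$. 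The first step is to show that $h_w$ is a quasi-morphism whose defect is bounded independently of $w$. The realising paths for $c_w(g)$ are uniform quasi-geodesics, because a path deviating much from a geodesic loses more length than $W$ copies can gain. Thin triangles in $X$ then let one cut and paste realising paths for $g$, $h$ and $gh$ up to a bounded error. The homogenisation $\bar h_w(g)=\lim_n h_w(g^n)/n$ is then a homogeneous quasi-morphism with defect at most four times that of $h_w$.

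The second step produces infinitely many test elements. Since $g_1,g_2$ are independent, ping-pong on their quasi-axes shows that for large exponents the words $f_i = g_1^{a_i}g_2^{b_i}g_1^{-c_i}g_2^{-d_i}\cdots$ are hyperbolic. Their axes are uniform quasi-geodesics made of long segments fellow-travelling translates of the axes of $g_1^{\pm1},g_2^{\pm1}$. By choosing distinct, asymmetric exponent patterns, one arranges that:
\begin{itemize}
\item no $f_i$ is quasi-equivalent to $f_i^{-1}$;
\item $f_i$ and $f_j^{\pm1}$ are not quasi-equivalent for $i\neq j$.
\end{itemize}
This is where the hypothesis that $g_1,g_2$ are not quasi-equivalent enters. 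If it failed, an isometry could carry a long $g_1$-segment onto a $g_2$-segment, and the combinatorial pattern of an axis would not be recoverable from its geometry.

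The third step takes $w_i$ to be a long fundamental segment of the axis of $f_i$. Then $c_{w_i}(f_i^n)$ grows linearly in $n$. Meanwhile $c_{w_i^{-1}}(f_i^n)$ stays bounded, since $f_i$ is not quasi-invertible, and $c_{w_i^{\pm1}}(f_j^n)$ stays bounded for $j\neq i$, since long copies of $w_i^{\pm1}$ cannot fellow-travel the axis of $f_j$. Hence $\bar h_{w_i}(f_j)\neq 0$ exactly when $j=i$. This triangular pattern shows that the classes of the $\bar h_{w_i}$ are linearly independent modulo homomorphisms, since homomorphisms are bounded by these considerations only up to the same triangular structure; one may pass to commutator-type words to kill homomorphic contributions. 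So the space of non-trivial quasi-morphisms is infinite-dimensional. I expect the main obstacle to be the second step: rigorously certifying that the ping-pong words are not quasi-invertible and pairwise not quasi-equivalent. This requires controlling how an arbitrary isometry $h$ can map a long axis segment of $f_i$ near the axis of $f_j^{\pm1}$, using only the non-quasi-equivalence of $g_1$ and $g_2$.
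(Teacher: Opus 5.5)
The paper does not prove this theorem; it quotes it from Bestvina--Fujiwara. Your outline follows their architecture: counting functions $c_w-c_{w^{-1}}$, ping-pong words, non-quasi-equivalence, and triangular independence. As written, though, there is a genuine gap in your first step: you cannot take $W=|w|$.

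\textbf{The choice $W=|w|$.} Let $\beta$ be a subpath from $p$ to $q$ of a realising path. Replacing $\beta$ by a geodesic destroys at most $|\beta|/|w|+2$ copies of $w$. Minimality therefore gives only $|\beta|\bigl(1-W/|w|\bigr)\le d(p,q)+2W$, which is vacuous when $W=|w|$. Concretely, at $W=|w|$ every copy of $w$ is free. A path can then make an arbitrarily long detour along a chain of translates of $w$ (or of $w^{-1}$) at no cost. This has three consequences.
\begin{itemize}
\item Realising paths need not be quasi-geodesics.
\item The thin-triangle cut-and-paste behind your defect bound is unavailable.
\item In your third step, nothing forces a realising path for $c_{w_i^{-1}}(x_0,f_i^nx_0)$ to stay near the axis of $f_i$, so non-quasi-invertibility no longer bounds it.
\end{itemize}
You need $0<W<|w|$, for instance a fixed $W$ with $|w|\ge 2W$; this makes realising paths uniform $(2,4W)$-quasi-geodesics. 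You must then also check that $W$ is large enough that running along consecutive copies on the axis of $f_i$ undercuts $d(x_0,f_i^nx_0)$ by an amount linear in $n$. That is what linear growth of $c_{w_i}(f_i^n)$ requires.

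\textbf{The second step.} This is where the hypothesis is actually used, and it is only asserted; your account of that use is also incomplete. Write $\sim$ for quasi-equivalence, which is oriented. Then $g_1\not\sim g_2$ does not exclude $g_1\sim g_1^{-1}$ together with $g_2\sim g_2^{-1}$, nor does it exclude $g_1\sim g_2^{-1}$. So isometries may carry long $g_1$-pieces onto reversed $g_1$-pieces, or onto reversed $g_2$-pieces. A rigidity argument can then use only the cyclic sequence of classes and lengths of maximal runs of pieces.

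This creates a conflict with killing homomorphisms.
\begin{itemize}
\item If $g_1\sim g_1^{-1}$ and $g_2\sim g_2^{-1}$, the commutator $[g_1^a,g_2^b]$ has the same such pattern as its inverse, so it cannot be certified non-quasi-invertible.
\item Words with nonzero exponent sums, such as $g_1^ag_2^bg_1^{-c}g_2^{-d}$ with $a\neq c$, need not be killed by homomorphisms. A triangular matrix $\bigl(\bar h_{w_i}(f_j)\bigr)$ then does not rule out $\sum_i a_i\bar h_{w_i}$ being a homomorphism $\phi$, and your remark about homomorphisms does not address this.
\end{itemize}
You need words that lie in $[G,G]$ and are pattern-rigid in every case. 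One choice is $f_i=[g_1^{a_i},g_2^{b_i}][g_1^{c_i},g_2^{d_i}]$, with large, well-separated exponents satisfying $a_i\neq c_i$ and $b_i\neq d_i$, varying with $i$. Check each case according to which of $g_1^{\pm1},g_2^{\pm1}$ are quasi-equivalent. Then evaluate a relation $\sum_i a_i\bar h_{w_i}=\phi$ at $f_1,f_2,\dots$; since $\phi(f_j)=0$, this forces all $a_i=0$.

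\textbf{The diagonal claim.} Drop the claim that $\bar h_{w_i}(f_j)\neq 0$ exactly when $j=i$. A single finite $w_i$ cannot be shown to avoid the axes of infinitely many $f_j$. Instead choose $w_i$ after $f_1,\dots,f_{i-1}$ and use only $\bar h_{w_i}(f_j)=0$ for $j<i$, which is all the triangular argument needs.
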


\subsection{BSF structures and resolving coverings}
In this section, we introduce some geometric notions from \cite{BHW2} and slightly adapt them to the non-orientable setting.

First, a \emph{singular flat structure} on a (possibly non-orientable) surface $S$ will mean a singular flat metric with discrete cone points, whose cone angles are all multiples of $\pi$.
We will usually denote the set of 1-prongs (also called angle-$\pi$ cone points) by $P$.
We can obtain these from gluing polygons.
This leads to:
\begin{definition}
    A \emph{bifoliated singular flat (BSF) structure} is a tuple $q = (\mathcal{F}_v, \mathcal{F}_h,d)$ consisting of\begin{itemize}
        \item a singular flat metric $d$ as before
        \item transverse singular foliations $\mathcal{F}_v$ and $\mathcal{F}_h$ which are locally geodesic and orthogonal with respect to $d$ except at the cone points
    \end{itemize}

We call leaves of the BSF structure \emph{regular}, if they don't contain a singular point, and \emph{singular}, if they are a maximal leaf segment or ray bounded by 
singularities, but with no singularities in its interior.
A \emph{pseudo-leaf} is a connected subset of the surface which is a finite union of singular leaves.

Further, we will call a foliation $\mathcal{F}$ \emph{minimal}, if every regular leaf of $\mathcal{F}$ is dense.
If additionally every pseudo-leaf is simply connected and contains at most one 1-prong, then we call $\mathcal{F}$ \emph{ending}.
\end{definition}

As we have to allow for 1-prongs, geodesics in BSF structures are not very well-behaved.
To control this, one introduces a few new notions:
given a non-orientable surface $N$ with BSF structure $q$, we consider the orientable finite branched coverings $\Sigma \rightarrow N$ with branches of order at least two at the 1-prongs.
Such a covering will again have a BSF structure, and ending foliations will lift to ending foliations.
See section 2 of \cite{BHW2} for more details.

We introduce another notion: 
following Koberda \cite{Koberda}, we say a covering $\operatorname{pr}\colon X\rightarrow Y$ is \emph{characteristic} if the subgroup $\operatorname{pr}_*(\pi_1(X)) < \pi_1(Y)$ is characteristic, i.e. it is fixed by the automorphism group of $\pi_1(Y)$.
We will also call branched coverings \emph{characteristic} if they restrict to covering maps away from the branch set.
It is easy to see that such coverings exist, as any subgroup $H$ of finite index in a finitely generated group $G$ contains a characteristic subgroup $C\subseteq H$ with finite index in $G$.
Characteristic coverings have a very useful property:

\begin{lemma}[folklore]\label{lem:char-covers-have-all-lifts}
If $\operatorname{pr}\colon X\rightarrow Y$ is a characteristic (branched) covering with $X$ and $Y$ manifolds, then every homeomorphism of $Y$ lifts to a homeomorphism of $X$.
\end{lemma}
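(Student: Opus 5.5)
The plan is to reduce the statement to the lifting criterion for covering spaces. First treat the unbranched case. Let $\operatorname{pr}\colon X\to Y$ be a finite-sheeted covering with $X$ connected, and let $H=\operatorname{pr}_*(\pi_1(X,x_0))$ be characteristic in $\pi_1(Y,y_0)$. Take a homeomorphism $f\colon Y\to Y$; it suffices to lift $f\circ\operatorname{pr}\colon X\to Y$ to a map $\tilde f\colon X\to X$.

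Choose a path $\gamma$ in $Y$ from $f(y_0)$ back to $y_0$. Let $c_\gamma$ be the change-of-basepoint isomorphism $\pi_1(Y,f(y_0))\to\pi_1(Y,y_0)$. Then $\phi\coloneqq c_\gamma\circ f_*$ is an automorphism of $\pi_1(Y,y_0)$, so $\phi(H)=H$ because $H$ is characteristic. Equivalently, $(f\circ\operatorname{pr})_*\pi_1(X,x_0)=f_*(H)$, which is conjugate to $H$ after moving the basepoint along $\gamma$. Hence it is contained in $\operatorname{pr}_*\pi_1(X,\tilde x)$ for a suitable lift $\tilde x$ of $f(y_0)$: take $\tilde x$ to be the endpoint of the lift of $\gamma^{-1}$ starting at $x_0$. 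Manifolds are connected and locally path connected, so the lifting criterion gives a continuous $\tilde f$ with $\operatorname{pr}\circ\tilde f=f\circ\operatorname{pr}$.

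Next, to see that $\tilde f$ is a homeomorphism, apply the same construction to $f^{-1}$. Choose its basepoint data compatibly and obtain $\tilde g$. Then $\tilde g\circ\tilde f$ is a lift of the identity, i.e. a deck transformation. Adjust $\tilde g$ by a deck transformation, or argue via uniqueness of lifts agreeing at one point, to get $\tilde g\circ\tilde f=\mathrm{id}$, and symmetrically $\tilde f\circ\tilde g=\mathrm{id}$. One must check that a deck transformation of $X$ composed with a lift is again a lift, which is immediate.

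For the branched case, let $B\subset Y$ be the branch set and $\tilde B=\operatorname{pr}^{-1}(B)$. Characteristic here means the restriction $X\setminus\tilde B\to Y\setminus B$ is a characteristic covering. This is the main obstacle, since a general homeomorphism $f$ need not preserve $B$. I would first restrict to homeomorphisms with $f(B)=B$, which is the situation in the applications: the branch locus is the set of 1-prongs of an invariant BSF structure, or we work in $\operatorname{Homeo}(Y\setminus P)$. Then $f$ restricts to a homeomorphism of $Y\setminus B$, and the argument above lifts it to a homeomorphism $\tilde f$ of $X\setminus\tilde B$.

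It remains to extend $\tilde f$ over the discrete set $\tilde B$. Near a point $b\in B$, a punctured disc neighbourhood $U^*$ has preimage a disjoint union of punctured discs, each mapped by $z\mapsto z^k$. The lift $\tilde f$ carries each such component to a component over $f(U^*)$. Continuity then extends it by sending the puncture to the corresponding puncture, because small neighbourhoods of the puncture in $X$ are exactly the preimage components of small neighbourhoods of $b$. Applying the same extension to the inverse shows the extension is a homeomorphism of $X$ covering $f$.
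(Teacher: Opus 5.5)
Your proposal is correct and follows the same route as the paper: apply the lifting criterion to $f\circ\operatorname{pr}$ using that the characteristic subgroup is preserved, obtain invertibility by lifting $f^{-1}$ and correcting by a deck transformation, and in the branched case restrict to homeomorphisms preserving the branch set, lift on the complement, and extend over the discrete preimage. Your version is more careful than the paper's sketch, tracking basepoints via the change-of-basepoint isomorphism and justifying the continuous extension over the branch points, both of which the paper only asserts.
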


\begin{proof}
Let $X$ and $Y$ be manifolds, and $\operatorname{pr}\colon X\rightarrow Y$ a characteristic covering.
Let $f\colon Y\rightarrow Y$ a homeomorphism.
We apply the \emph{lifting criterion} (see e.g. Proposition 1.33 in \cite{Hatcher}) to $f\circ \operatorname{pr}$:
as $\operatorname{pr}(\pi_1(X)) < \pi_1(Y)$ is characteristic, we know that $f_*(\operatorname{pr}_*(\pi_1(X))) = \operatorname{pr}(\pi_1(X))$.
Therefore, $f\circ \operatorname{pr}$ satisfies the lifting criterion, and we get a lift $\hat{f}\colon X\rightarrow X$.
By the same argument, up to a Deck transformation, the inverse of $f$ lifts to the inverse of $\hat f$ as well, which shows that $\hat{f}$ is a homeomorphism.

In the case where $\operatorname{pr}$ is a branched covering and $f\colon Y\rightarrow Y$ a homeomorphism that preserves the branching points, we first apply the above to a restriction of $\operatorname{pr}$ and $f$ away from the branching points, and then extend the resulting lifts to all of $X$.
\end{proof}

For a non-orientable surface $N$ with BSF structure $q$, we call the orientable finite characteristic branched coverings $\Sigma \rightarrow N$ with branches of order at least two at the 1-prongs \emph{$P$-resolving}.
The specific choice of the $P$-resolving covering will not change the arguments we present in the later sections, we will however assume that we use the same covering throughout the paper.
We also note that now closed geodesics in the $P$-resolving covering can only intersect in geodesic segments, but not bound bigons (see Lemma 2.7 of \cite{BHW2}).

Also note that the universal covering $\Tilde{\Sigma}\rightarrow \Sigma$ with the metric induced by the lifted BSF structure $\Tilde{q}$ is hyperbolic.
This means that the leaf spaces of the horizontal and vertical foliations together with transverse measures are real trees $T_h$, $T_v$.
This allows us to define the \emph{horizontal width} of a subset $X$ of $\Tilde{\Sigma}$ as the diameter of the projection of $X$ onto the vertical tree $T_v$, and analogously the \emph{vertical height} through the projection onto the horizontal tree $T_h$.

One would expect that a vertical geodesic intersects a horizontally wide, but vertically small curve within bounded time.
This is made precise in:

\begin{lemma}[Lemma 3.6 in \cite{BHW2}]\label{lem:target}
    Let $q$ be a BSF structure on $S$ with ending vertical foliation.
    Then, for any $B$, $\epsilon$ there is a number $L>0$ with the following property:
    suppose that $\gamma \colon [0,1]\rightarrow S$ is a path so that the total width of $\gamma$ is at least $\epsilon$ and the size of $\gamma$ is at most $B$.
    Then, any vertical geodesic for $q$ of length at least $L$ intersects $\operatorname{im}(\gamma)$.
\end{lemma}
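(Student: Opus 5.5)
We plan to reduce to the orientable situation treated in \cite{BHW2} and then run a compactness argument there. First we fix the $P$-resolving covering $\operatorname{pr}\colon\Sigma\to S$. The lifted BSF structure $\hat q$ on $\Sigma$ is locally isometric to $q$ away from the branch points, and the lifted vertical foliation is again ending. Lift $\gamma$ to a path $\hat\gamma$ in $\Sigma$. Since $\operatorname{pr}$ is a local isometry preserving both foliations, the width and size of $\hat\gamma$, measured in $\Tilde\Sigma$ via the trees $T_v,T_h$, agree with those of $\gamma$ up to a constant depending only on the covering. Every vertical geodesic of length $L$ in $S$ lifts to a vertical geodesic of length $L$ in $\Sigma$, and an intersection upstairs projects to one downstairs. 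So it suffices to prove the statement on $\Sigma$, where geodesics cannot form bigons (Lemma 2.7 of \cite{BHW2}).

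On $\Sigma$ we argue by contradiction. Suppose that for every $n$ there are a path $\gamma_n$ of width $\geq\epsilon$ and size $\leq B$, and a vertical geodesic $v_n$ of length $\geq n$ with $v_n\cap\operatorname{im}(\gamma_n)=\emptyset$. We translate by deck transformations so that the lifts $\tilde\gamma_n$ meet a fixed compact fundamental domain in $\Tilde\Sigma$. The bounded size then confines $\tilde\gamma_n$ to a fixed compact set. We also translate $v_n$ so that its midpoint lies in the compact quotient. We replace $\gamma_n$ by a subpath crossing a fixed horizontal strip: a "horizontal rectangle" $R_n$ of width $\geq\epsilon$ and height $\leq B$ which $\gamma_n$ crosses from one vertical side to the other. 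By compactness these rectangles converge to a rectangle $R$, and the $v_n$ subconverge, with midpoints converging, to a bi-infinite vertical geodesic $v$ in $\Sigma$. Here $v$ is a concatenation of vertical leaves turning at singularities. Because the foliation is ending and there are no bigons, $v$ contains a vertical ray that is a genuine leaf ray.

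Minimality of the vertical foliation gives that this leaf ray is dense. Hence $v$ crosses the interior of $R$ transversally along a full vertical segment running from the bottom to the top of a sub-rectangle of $R$. Crossing a rectangle from bottom to top is an open condition. So for large $n$ the geodesic $v_n$ crosses $R_n$ from bottom to top, while $\gamma_n$ crosses $R_n$ from left to right. A planar (Jordan-curve) argument in the disk $R_n$ then forces $v_n\cap\operatorname{im}(\gamma_n)\neq\emptyset$, which is the desired contradiction. Uniformity of $L$ in $B$ and $\epsilon$ comes from the compactness itself, since the argument never fixes a particular $\gamma$.

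The main obstacle is the limiting step near singularities. Limits of vertical geodesics may pass through cone points, and $\gamma_n$ may wander arbitrarily inside its bounded box, so the rectangles $R_n$ must be chosen carefully. We would pick $R_n$ inside the projection of $\tilde\gamma_n$ to $T_v$, avoiding the finitely many singular leaves at bounded distance. We would then use the ending condition: pseudo-leaves are trees with at most one 1-prong, and 1-prongs are resolved in $\Sigma$. This rules out the limit $v$ being trapped in a compact pseudo-leaf, or folding back and avoiding $R$.
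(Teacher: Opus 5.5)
Your reduction to the orientable $P$-resolving cover $\Sigma$ is exactly the paper's proof. The paper only observes that \cite{BHW2} already run the argument on an orientable $P$-resolving cover, so it applies verbatim when $S$ is non-orientable; it does not reprove the orientable statement.

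You go further and supply your own compactness proof of that statement: a singularity-free flat rectangle crossed horizontally by $\gamma_n$, limits of rectangles and of vertical geodesics, a dense leaf ray in the limit, and stability of a transverse crossing. This makes the lemma independent of the internals of \cite{BHW2}, at the price of the singularity bookkeeping you flag. That bookkeeping does go through, but it should be stated accurately.

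First, the rectangle only has width about $\epsilon/(M+1)$, not $\epsilon$. Here $M=M(B)$ bounds the number of cone points within a fixed multiple of $B$ of $\tilde\gamma_n$.

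Second, branch points of $T_v$ are dense, so you must check that the crossing subpath really stays inside the rectangle. This uses two facts: each non-excluded vertical pseudo-leaf is a convex geodesic tree in $\tilde\Sigma$ meeting a ball around $\tilde\gamma_n$ in a single arc, and $\tilde\Sigma\to T_v\times T_h$ is injective. The injectivity is where the absence of bigons actually enters.

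Third, the fact that the limit $v$ contains a leaf ray does not come from the absence of bigons. It follows from vertical pseudo-leaves of $\Sigma$ being finite trees. This is the ending property, which survives the passage to $\Sigma$ precisely because each pseudo-leaf contains at most one 1-prong. You also need the standard fact that in a minimal measured foliation separatrix rays are dense, not just regular leaves.

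Finally, if the heights of the $R_n$ tend to $0$, the limit $R$ has empty interior. In that case, run the openness argument at an interior point of the limiting horizontal segment rather than at the interior of $R$.
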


Note that in \cite{BHW2}, this is proven for a $P$-resolving covering (using a more general definition than we are), which is always orientable.
Hence, the proof in the non-orientable setting goes through by the same argument.

\section{Hyperbolicity of the fine curve graphs}
In this section we show the hyperbolicity of the fine non-separating curve graphs $\fncg(N)$ for $N$ the real projective plane $\rp$ with up to one puncture.
We will keep this notation throughout the paper.

The material in this section is covered by the work of Kuno \cite{kuno-hyperbolicity} and Bowden, Hensel and Webb \cite{BHW1}.
Hence, we will only cite their statements without proofs.

\subsection{Curve Graphs}\label{subsec:kuno-argument}

The first milestone is to establish the hyperbolicity of the non-separating curve graphs $\ncg(N\setminus P)$, where $P \subset N$ is a finite (possibly empty) set of points.

The strategy is the same as in Rasmussen's paper \cite{rasmussen}, which treats orientable surfaces, and has appeared before in \cite{unicorns}.
One can construct paths between vertices through bicorn surgeries which satisfy the assumptions of the guessing geodesics theorem (Theorem \ref{thm:bowditch-guessing-geodesics}).

The following result already appears in \cite{kuno-hyperbolicity}:
\begin{theorem}[cf. Theorem 1.1 in \cite{kuno-hyperbolicity}]\label{thm:unif-hyperbolicity}
    There exists $\delta > 0$ such that for any finite set $P\subset N$ the fine non-separating curve graph $\ncg(N\setminus P)$ is $\delta$-hyperbolic.

    In addition, if $\alpha$ and $\beta$ are two non-separating curves in minimal position, the the set of all non-separating bicorns formed by $\alpha$ and $\beta$ is Hausdorff close to a geodesic between $\alpha$ and $\beta$ in the non-separating curve graph of $N$ (with uniform constants).
\end{theorem}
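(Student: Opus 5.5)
We prove Theorem~\ref{thm:unif-hyperbolicity} by following Rasmussen's bicorn strategy and applying the guessing geodesics criterion (Theorem~\ref{thm:bowditch-guessing-geodesics}). Since $N$ has genus one, $\ncg(N\setminus P)$ allows one intersection point between adjacent vertices. For two non-separating curves $\alpha,\beta$ in minimal position we call a curve $c=a\cup b$ a \emph{bicorn} if $a\subseteq\alpha$ and $b\subseteq\beta$ are arcs sharing their endpoints and otherwise disjoint. We define $P(\alpha,\beta)$ to be the set of non-separating bicorns of $\alpha,\beta$, together with $\alpha$ and $\beta$ themselves. Bicorns are ordered by inclusion of their $\beta$-arcs. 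A bicorn is non-separating, up to a bounded modification, whenever it is one-sided. Two-sided bicorns in $N\setminus P$ may be separating, and these are exactly the cases we must excise or replace.

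\textbf{Step 1: connectivity of $P(\alpha,\beta)$.} Start from $\alpha$ and enlarge the $\beta$-arc one intersection point at a time. We aim to show that consecutive bicorns intersect in a uniformly bounded number of points, and hence lie at uniformly bounded distance by Lemma~\ref{lem:upper-bound-intersection-no}. We must also show that whenever a step produces a separating bicorn, a small surgery (using the other side of the arc) yields a non-separating bicorn nearby. In $N$, a separating simple closed curve bounds a disc or a M\"obius band, possibly containing points of $P$. Both possibilities can be analysed by hand, which is what keeps the constants uniform in $|P|$.

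\textbf{Step 2: the adjacent case.} If $d(\alpha,\beta)=1$, the curves meet in at most one point. Then every bicorn is essentially $\alpha$, $\beta$, or a single curve built from one arc of each, so $\operatorname{diam} P(\alpha,\beta)$ is bounded.

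\textbf{Step 3: thin triangles.} For three curves $\alpha,\beta,\gamma$, put them in pairwise minimal position and take $c\in P(\alpha,\gamma)$. Following Rasmussen, we slide along the $\gamma$-arc of $c$ until we first meet $\beta$. This produces a bicorn of $(\alpha,\beta)$ or of $(\beta,\gamma)$ which intersects $c$ a bounded number of times, after again correcting separating outputs. Lemma~\ref{lem:upper-bound-intersection-no} then gives $P(\alpha,\gamma)\subseteq N_D(P(\alpha,\beta)\cup P(\beta,\gamma))$ with $D$ independent of $P$. Theorem~\ref{thm:bowditch-guessing-geodesics} now yields uniform $\delta$, and it also yields the Hausdorff closeness of the bicorn set to geodesics.

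The main obstacle is ensuring non-separation with uniform control. In higher genus, Rasmussen's argument can discard separating bicorns because nearby non-separating ones are always available. On the projective plane, the one-sided or two-sided nature of arcs is delicate. I would first try a parity argument: a bicorn is one-sided iff its $\mathbb{Z}/2$-homology class is nonzero, and the two bicorns obtained by extending by one arc in either direction have classes differing by that of $\alpha$ or $\beta$, so at least one is non-separating. Kuno's variant graph, which allows two intersection points, is quasi-isometric to ours and could absorb the extra steps.
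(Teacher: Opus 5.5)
Your outline follows the route the paper points to: Rasmussen's non-separating bicorns fed into Theorem~\ref{thm:bowditch-guessing-geodesics}. The paper does not carry it out, though. It imports Kuno's theorem, notes that for $|P|\le 2$ the graphs have bounded diameter, and observes that Kuno's argument verifies Bowditch's hypotheses with bicorn paths, which gives the Hausdorff statement. As a self-contained proof, your proposal has a gap exactly where the non-separating version differs from the ordinary bicorn argument for the full curve graph. You never show that non-separating bicorns are coarsely connected and satisfy the thin-triangle condition, and the mechanisms you suggest do not work.

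\textbf{Step 1 as proposed.} For a bicorn $c=a\cup b$, the definition only forces the interior of $b$ to miss $a$. So $b$ may cross $\overline{\alpha\setminus a}$, and then the ``other side'' curve $\overline{\alpha\setminus a}\cup b$ is not simple. The identity $[a\cup b]+[\overline{\alpha\setminus a}\cup b]=[\alpha]$ in $H_1(N;\mathbb{Z}/2)$ therefore produces no non-separating bicorn. It only works for the very first step out of $\alpha$, where $b$ is a subarc of $\beta$ whose interior misses $\alpha$. Likewise, the two one-step extensions of $c$ at the endpoints $x,y$ of $b$ do not have classes differing by $[\alpha]$ or $[\beta]$. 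Their classes are $[c]+[c^{\circ}_x]$ and $[c]+[c^{\circ}_y]$ for two auxiliary bicorns, and one can build configurations in which both extensions are separating although $c$ is not.

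\textbf{How Step 1 can be repaired.} Let $z$ be the first point of $a$ met by $\beta$ after leaving $b$ at $y$. Then $c^{+}=[x,z]_\alpha\cup b\cup[y,z]_\beta$ and $c^{\circ}=[z,y]_\alpha\cup[y,z]_\beta$ are both bicorns. Each meets $c$ exactly in a shared arc, and $[c]=[c^{+}]+[c^{\circ}]$. So if $c$ is non-separating, one of them is non-separating, and that one either equals $\beta$ or has strictly fewer points of $\alpha\cap\beta$ in the interior of its $\alpha$-arc. Iterating gives chains of non-separating bicorns from any non-separating bicorn to $\beta$, with uniformly bounded steps.

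\textbf{Step 3 remains open.} Nothing in your outline stands behind ``after again correcting separating outputs''. Take $c\in P(\alpha,\gamma)$. The natural parity trick relative to $c$ says that an arc of $\beta$ with interior in $N\setminus c$ has exactly one one-sided completion by an arc of $c$. But that completion may be built from arcs of all three of $\alpha,\beta,\gamma$, so it need not be a bicorn of either pair. Alternatively, arcs of $\beta$ with both endpoints on the $\alpha$-arc of $c$ give genuine $(\alpha,\beta)$-bicorns, but they may cross the $\gamma$-arc of $c$ arbitrarily often. So their intersection with $c$, and hence their distance to $c$, is not controlled. This is the real content of Kuno's adaptation of Rasmussen. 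Without supplying it, or citing it as the paper does, neither the uniform $\delta$ nor the Hausdorff statement follows from Bowditch's criterion.
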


\begin{remark}
    In \cite{kuno-hyperbolicity}, the theorem is stated for $\rp$ with at least 3 punctures and the Möbius strip with at least 2 punctures respectively, and for surfaces of higher genus.
    This was chosen so that the respective (non-separating) curve graphs have infinite diameter.
    One can easily check that in the low-genus cases with few punctures, the statement is still true with the exception that the graphs will have finite diameter.

    We also added the second statement about bicorn paths:
    what Kuno proves actually satisfies the version of guessing geodesics in Theorem \ref{thm:bowditch-guessing-geodesics}, so we can see that bicorn paths are quasi-geodesics.
    This is analogous to the comments after Theorem 2.2 in \cite{BHW2} on the paper of Rasmussen \cite{rasmussen}.
\end{remark}

\subsection{Fine Curve Graphs}
Let $N$ be defined as before.
We now want to use Theorem \ref{thm:unif-hyperbolicity} to prove that $\fncg(N)$ is hyperbolic.
This is an application of the techniques of section 3 in \cite{BHW1}.

The main idea is: 
if we have two curves $a,b$ that intersect transversely, we can add punctures away from these curves such that they represent different homotopy classes of curves in a punctured surface which are in minimal position.

We first see the following lemma:
\begin{lemma}[cf. Lemma 3.4 of \cite{BHW1}]\label{lem:approx-by-NC}
    Suppose that $\alpha,\beta \in \fncg(N)$ are transverse, and that $\alpha$ and $\beta$ are in minimal position in $N\setminus P$, where $P$ is finite and disjoint from $\alpha$ and $\beta$.
    Then
    \[ d_{\fncg(N)}(\alpha,\beta) = d_{\ncg(N\setminus P)}([\alpha]_{\ncg(N\setminus P)},[\beta]_{\ncg(N\setminus P)}) .\]
\end{lemma}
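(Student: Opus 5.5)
We prove the equality by establishing the two inequalities separately, following the strategy of Lemma 3.4 in \cite{BHW1}.

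\emph{Lower bound.} Let $\alpha=\gamma_0,\gamma_1,\dots,\gamma_n=\beta$ be a geodesic path in $\fncg(N)$. We want to turn it into a path in $\ncg(N\setminus P)$ from $[\alpha]$ to $[\beta]$ of length at most $n$. The difficulty is that the intermediate curves $\gamma_i$ may pass through points of $P$. Since $P$ is finite and disjoint from $\alpha$ and $\beta$, we can choose a homeomorphism $\phi$ of $N$ that is isotopic to the identity, is supported in small discs around the points of $P$ away from $\alpha\cup\beta$, and moves each point of $P$ off the finite union $\bigcup_i \gamma_i$. Such points exist because each $\gamma_i$ is a closed curve, so the union is nowhere dense.

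Replacing each $\gamma_i$ by $\phi^{-1}(\gamma_i)$ for $0<i<n$ gives a new path in $\fncg(N)$ with the same endpoints. The maps preserve disjointness and intersection counts, so adjacency (including the at-most-one-intersection rule in low genus) survives. All intermediate curves now avoid $P$. Alternatively, one can perturb the $P$-points and fix the curves. Each resulting curve is non-separating in $N$, hence non-separating in $N\setminus P$, and so defines a vertex of $\ncg(N\setminus P)$. Adjacent curves have representatives meeting in at most one point, so their classes are adjacent or equal. Therefore
\[ d_{\ncg(N\setminus P)}\bigl([\alpha],[\beta]\bigr)\le d_{\fncg(N)}(\alpha,\beta). \]

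\emph{Upper bound.} Take a geodesic $[\alpha]=c_0,\dots,c_n=[\beta]$ in $\ncg(N\setminus P)$. We need to realise it by actual curves $\gamma_i$ in $N\setminus P$ with $\gamma_0=\alpha$, $\gamma_n=\beta$, and consecutive curves meeting in at most one point (disjoint in higher genus). A non-separating curve in $N\setminus P$ stays non-separating after filling in the punctures, so such curves are vertices of $\fncg(N)$.

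The key tool is that curves in $N\setminus P$ can be isotoped to realise geometric intersection numbers simultaneously, for example as geodesics in a hyperbolic metric on $N\setminus P$, or via bigon removal. Since $\alpha$ and $\beta$ are already in minimal position, we can fix them. We then realise $c_1,\dots,c_{n-1}$ so that each consecutive pair is in minimal position with each other.

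To achieve this, choose a complete hyperbolic metric on $N\setminus P$ in which $\alpha$ and $\beta$ are geodesic. This is possible because two transverse curves in minimal position can be made simultaneously geodesic by an ambient isotopy of $N\setminus P$; we apply the inverse isotopy at the end. Then take the geodesic representatives of the $c_i$. Geodesics realise minimal intersection pairwise. The low-genus one-sided cases need care, since a one-sided geodesic meeting another geodesic once is the minimum. Two geodesics in the same class coincide, and this can be avoided because consecutive vertices of a geodesic path are distinct classes. We conclude
\[ d_{\fncg(N)}(\alpha,\beta)\le d_{\ncg(N\setminus P)}\bigl([\alpha],[\beta]\bigr). \]

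The main obstacle is this simultaneous-minimal-position step in the non-orientable setting with possibly only one or two punctures. There $N\setminus P$ may not admit a hyperbolic metric; $\rp$ minus one point is a Möbius band. In those cases I would instead argue directly with bigon and half-bigon removal on the non-orientable surface to put all the curves in pairwise minimal position while keeping $\alpha$ and $\beta$ fixed. Failing that, I would add extra auxiliary punctures away from $\alpha\cup\beta$ to make the surface hyperbolic, and then compare the two curve graphs.
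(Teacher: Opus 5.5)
Your two-inequality argument is the one the paper intends; the paper itself just defers to Lemma 3.4 of BHW1. For the lower bound, you move $P$ off the intermediate fine curves by a homeomorphism supported near $P$. For the upper bound, you realise an $\ncg(N\setminus P)$-geodesic by closed geodesics of a hyperbolic metric, and use the ambient isotopy between the minimal-position pair $(\alpha,\beta)$ and its geodesic representatives to fix the endpoints. That isotopy statement needs a non-orientable version, since Lemma 2.9 of \cite{primer} is stated only for orientable surfaces. With that input, your argument is a complete proof whenever $[\alpha]\neq[\beta]$ in $N\setminus P$.

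The case you single out as the main obstacle, however, cannot be closed by half-bigon surgery or by auxiliary punctures, because the equality is false there.

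\begin{itemize}
\item If $N\setminus P$ is $\rp$ or an open Möbius band, every non-separating curve is one-sided and all one-sided curves are homotopic. So $\ncg(N\setminus P)$ is a single vertex, whereas transverse (hence distinct) $\alpha,\beta$ have $d_{\fncg(N)}(\alpha,\beta)\geq 1$.
\item More generally, the equality fails whenever $[\alpha]=[\beta]$ but $\alpha\neq\beta$, for example two homotopic one-sided curves in minimal position meeting once. Then the left-hand side is $1$ and the right-hand side is $0$.
\end{itemize}

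Your upper-bound argument breaks exactly at $n=0$, where no path can start at $\alpha$ and end at $\beta$. Adding auxiliary punctures cannot rescue this, since it changes the right-hand side.

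So the lemma has to be read with the extra hypothesis $[\alpha]\neq[\beta]$ in $N\setminus P$; in the hyperbolicity proof an additive error of $1$ would be harmless anyway. That hypothesis forces $N\setminus P$ to be $\rp$ minus at least two points. Hence $\chi(N\setminus P)<0$ and a hyperbolic metric exists. Note that $\rp$ minus two points is already hyperbolic, so only the cases with at most one puncture in total were ever at issue. Your geodesic argument therefore already covers every case in which the statement is true.
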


The proof of this is split into the proofs of two inequalities.
For each part, there are arguments to be made as to why a geodesic in one of the graphs has a corresponding path in the other graph.
These arguments are unchanged by the fact that we are working with non-orientable surfaces, and so is the proof of this lemma.

To prove hyperbolicity, the main effort will be to ensure we can work in the setting of Lemma \ref{lem:approx-by-NC}.
We are ready to prove:

\begin{theorem}[cf. Theorem 3.7 in \cite{BHW1}]
    The graph $\fncg(N)$ is hyperbolic.
\end{theorem}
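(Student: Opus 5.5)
We follow the strategy of Section 3 of \cite{BHW1} and apply the guessing geodesics criterion (Theorem \ref{thm:bowditch-guessing-geodesics}) directly in $\fncg(N)$. The uniform constant $\delta$ of Theorem \ref{thm:unif-hyperbolicity}, which does not depend on the number of punctures, supplies the uniform constants.

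\textbf{Reduction to transverse curves.} For any curve $\alpha\in\fncg(N)$ and any finite collection of curves, we can find a curve $\alpha'$ disjoint from $\alpha$, or meeting it at most once in the genus-one convention, that is transverse to the whole collection. Such an $\alpha'$ is obtained by a small isotopy and has distance at most $1$ from $\alpha$. It therefore suffices to define paths $P(\alpha,\beta)$ for transverse pairs and to check the thin-triangle condition for triples of pairwise transverse curves. For non-transverse or adjacent pairs we use a path through nearby perturbations; its diameter is bounded by a constant.

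\textbf{Choosing paths.} Given transverse $\alpha,\beta$, they meet in finitely many points, so they bound finitely many bigons (and other inessential disc regions). We choose a finite set $P$ with one puncture inside each bigon, and also inside each disc or Möbius-band-type complementary region that could allow a homotopy reducing intersection. Then $\alpha,\beta$ are in minimal position in $N\setminus P$. We define $P(\alpha,\beta)$ as the set of non-separating bicorns of $\alpha$ and $\beta$, realised as actual curves in $N$ that are built from arcs of $\alpha$ and $\beta$ and slightly perturbed. By Lemma \ref{lem:approx-by-NC}, together with its proof applied to pairs of bicorns, distances among these curves in $\fncg(N)$ agree with distances in $\ncg(N\setminus P)$. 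The second part of Theorem \ref{thm:unif-hyperbolicity} then says $P(\alpha,\beta)$ is uniformly Hausdorff close to a geodesic. In particular $P(\alpha,\beta)$ is connected, up to uniformly bounded gaps, and has bounded diameter when $d(\alpha,\beta)=1$.

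\textbf{Thin triangles.} For three pairwise transverse curves $\alpha,\beta,\gamma$, we choose one finite set $P$ that puts all three pairs into minimal position simultaneously: puncture every bigon formed by any pair, and perturb so that no triple points occur. In $\ncg(N\setminus P)$, which is $\delta$-hyperbolic with $\delta$ independent of $P$, the three bicorn paths are quasi-geodesics with uniform constants. They therefore form a uniformly thin triangle. Lemma \ref{lem:approx-by-NC} transfers this back to $\fncg(N)$, giving $P(\alpha,\gamma)\subseteq N_D(P(\alpha,\beta)\cup P(\beta,\gamma))$ with $D$ uniform. Theorem \ref{thm:bowditch-guessing-geodesics} then yields hyperbolicity.

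\textbf{Main obstacle.} The delicate step is the simultaneous minimal position. We must show that adding punctures in all bigons of all three pairs really makes every pair minimal in $N\setminus P$. In the non-orientable genus-one setting, with the modified adjacency allowing one intersection, there can be additional reductions of intersection, for example across a Möbius band where a curve is homotopic to a one-sided core. I would handle this with the bigon criterion for non-orientable surfaces, by passing to the orientation double cover. I would also check that the bicorn surgeries produce non-separating curves whose pairwise intersections are exactly those counted in $N\setminus P$, so that Lemma \ref{lem:approx-by-NC} applies to every pair of path points, and not only to the endpoints.
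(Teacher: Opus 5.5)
\textbf{There is a genuine gap.} Your route uses Bowditch's guessing-geodesics criterion with bicorn paths, which differs from the paper's route, and the key transfer step does not go through as stated.

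Lemma \ref{lem:approx-by-NC} identifies $d_{\fncg(N)}(\kappa,\lambda)$ with $d_{\ncg(N\setminus P)}([\kappa],[\lambda])$ only when $\kappa,\lambda$ are transverse and in minimal position in $N\setminus P$. In general only $d_{\ncg(N\setminus P)}([\kappa],[\lambda])\le d_{\fncg(N)}(\kappa,\lambda)$ is available. To turn the inclusion $P(\alpha,\gamma)\subseteq N_D(P(\alpha,\beta)\cup P(\beta,\gamma))$ in $\ncg(N\setminus P)$ into the same inclusion in $\fncg(N)$, you need the reverse inequality. You need it for a bicorn $c$ of $(\alpha,\gamma)$ and a bicorn $c'$ of $(\alpha,\beta)$ or $(\beta,\gamma)$.

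Such curves share arcs of $\alpha$, so they are not transverse. After perturbation they can bound a disc whose boundary is an arc of $\gamma\subset c$ together with an arc of $c'$ that runs along $\alpha$ and turns onto $\beta$ at a point of $\alpha\cap\beta$. This disc is a bigon of $c,c'$, but it is a triangle of $\alpha\cup\beta\cup\gamma$, not a bigon of any of the three pairs. So your puncture set need not meet it, and $c,c'$ need not be in minimal position in $N\setminus P$. Their fine distance is then not bounded by the distance of their classes.

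Your claims that $P(\alpha,\beta)$ is connected up to bounded gaps and has bounded diameter for adjacent endpoints rest on the same unjustified transfer, since bicorns of a single pair also share arcs. You flag this at the very end, but it is the real crux. The simultaneous minimal position of $\alpha,\beta,\gamma$ that you call delicate is routine: put a puncture in each bigon face of each pair, and note that extra punctures never destroy minimal position.

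One could try to repair your route by puncturing every complementary disc of $\alpha\cup\beta\cup\gamma$ and proving that suitably perturbed curves built from arcs of the three curves are pairwise minimal. That is a substantial extra argument. You would still have to handle the fact that Bowditch's criterion needs one path per pair that works for all triples, while your perturbations for non-transverse pairs are chosen pair by pair.

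The paper sidesteps all of this by using the four-point definition of $\delta$-hyperbolicity, which involves only the distances among four given curves and nothing in between.
\begin{itemize}
\item For $\mu,\alpha,\beta,\gamma$ one chooses, as in \cite{BHW1}, perturbations $\alpha',\beta',\gamma'$ at distance at most $1$ such that $\mu,\alpha',\beta',\gamma'$ are pairwise transverse and no pairwise distance increases.
\item A single $P$ then puts all six pairs in minimal position, and Lemma \ref{lem:approx-by-NC} applies to each pair.
\item The uniform $\delta$-hyperbolicity of $\ncg(N\setminus P)$ from Theorem \ref{thm:unif-hyperbolicity} gives the four-point inequality for the primed curves.
\item Each Gromov product based at $\mu$ moves by at most $1$ under the perturbation, so $\fncg(N)$ is $(\delta+2)$-hyperbolic.
\end{itemize}
The bicorn part of Theorem \ref{thm:unif-hyperbolicity} is not needed here; the paper uses it only afterwards, in Lemma \ref{lem:nice-bicorn-paths}.
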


\begin{proof}
    As in \cite{BHW1}, we will prove that $\fncg(N)$ is $(\delta +2)$-hyperbolic, where $\delta$ satisfies Theorem \ref{thm:unif-hyperbolicity}.
    The main step is to show that for arbitrary vertices $\mu,\alpha,\beta,\gamma \in\fncg(N)$, we can find $\alpha', \beta', \gamma'$ with the following properties:\begin{enumerate}
        \item $d_{\fncg(N)}(\alpha,\alpha'), d_{\fncg(N)}(\beta,\beta'), d_{\fncg(N)}(\gamma,\gamma') \leq 1$
        \item the curves $\mu, \alpha', \beta', \gamma'$ are pairwise transverse
        \item for any $\kappa, \lambda \in \{\mu, \alpha,\beta,\gamma\}$ we have
        \[d_{\fncg(N)}(\kappa', \lambda') \leq d_{\fncg(N)}(\kappa, \lambda)\]
        where we set $\mu' = \mu$.
    \end{enumerate}
    The existence of such curves follows by the same argument as in \cite{BHW1}.
    After these have been established, one can find the necessary bounds on the Gromov product using the previous Lemma \ref{lem:approx-by-NC}, as the graphs $\ncg(N\setminus P)$ are uniformly hyperbolic.
\end{proof}

From Theorem \ref{thm:unif-hyperbolicity}, we know that paths constructed from bicorn surgeries are Hausdorff-close to geodesics.
The following lemma improves this, and will allow us to construct boundary points in section \ref{subsec:construct-bdary-points}:

\begin{lemma}[cf. Lemma 4.2 of \cite{BHW2}]\label{lem:nice-bicorn-paths}
Let $P\subset S$ be a finite set.
Suppose that $\alpha$ and $\alpha'$ are two non-separating curves in minimal position on $N\setminus P$.
Then any sequence of bicorn surgeries formed by $\alpha$ and $\alpha'$ define a uniform (unparametrised) quasi-geodesic in $\fncg(N)$.
This means that there is a constant $K>0$ such that:
if $(a_i)$ is a sequence of bicorn surgeries of $\alpha$ towards $\alpha'$, then the sequence admits a reparametrisation $i(n)$, so that $n\mapsto a_{i(n)}$ is a $K$-quasi-geodesic.
\end{lemma}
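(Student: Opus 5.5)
We follow the strategy of Lemma 4.2 of \cite{BHW2}, combining the second part of Theorem \ref{thm:unif-hyperbolicity} with Lemma \ref{lem:approx-by-NC}. The idea is to transfer the quasi-geodesic behaviour of bicorn paths in the coarse graphs $\ncg(N\setminus P')$ to the fine graph $\fncg(N)$. To do so, we pass to a suitable puncture set $P'\supseteq P$ in which all the relevant curves are pairwise in minimal position.

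\textbf{Step 1.} We first make the bicorns transverse. Each bicorn $a_i$ is a union of an arc of $\alpha$ and an arc of $\alpha'$, so it is not transverse to $\alpha$ or $\alpha'$. We replace each $a_i$ by a small push-off $a_i'$ into a regular neighbourhood of $a_i$, chosen so that $a_i'$ is disjoint from $a_i$. The curves $a_i$ and $a_i'$ are then adjacent in $\fncg(N)$, so this changes distances by at most $2$. Any finite subfamily of the $a_i'$, together with $\alpha$ and $\alpha'$, can be taken pairwise transverse. A path of bicorn surgeries is finite, because $\alpha$ and $\alpha'$ intersect in finitely many points, so we may work with the whole family at once.

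\textbf{Step 2.} Next we add punctures to reach minimal position. Following the argument of Theorem 3.7 in \cite{BHW1}, we add a finite set of punctures $P'\supseteq P$, disjoint from all the curves. We place one puncture in each complementary bigon, and in each monogon (allowed in genus $1$), of every pair among $\alpha,\alpha',a_i'$. This makes all these curves pairwise in minimal position in $N\setminus P'$, and it keeps $\alpha$ and $\alpha'$ in minimal position. Lemma \ref{lem:approx-by-NC} then gives, for all pairs of these curves,
\[ d_{\fncg(N)} = d_{\ncg(N\setminus P')} .\]
The push-offs $a_i'$ are isotopic in $N\setminus P'$ to the bicorns $a_i$, since no puncture lies in the thin annulus between them. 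So $[a_i']$ is exactly the bicorn sequence of $[\alpha]$ towards $[\alpha']$ in $\ncg(N\setminus P')$.

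\textbf{Step 3.} Now we apply uniform hyperbolicity. By Theorem \ref{thm:unif-hyperbolicity}, with constants independent of $P'$, the set of bicorns is uniformly Hausdorff close to a geodesic from $[\alpha]$ to $[\alpha']$. Consecutive bicorns in a surgery sequence intersect at most a bounded number of times. By Lemma \ref{lem:upper-bound-intersection-no}, their distance is therefore uniformly bounded. A path with uniformly bounded jumps that stays uniformly close to a geodesic in a $\delta$-hyperbolic graph can be reparametrised as a uniform quasi-geodesic, by the standard projection argument. The remaining worry is that the path might backtrack, but any backtracking is bounded by the Hausdorff constant.

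The distance equality of Step 2 transfers the resulting inequalities to $\fncg(N)$, up to the additive error $2$ from Step 1. This yields a constant $K$ depending only on $\delta$ and the Hausdorff constant.

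\textbf{Main obstacle.} The hardest step is Step 2. We must simultaneously put the whole family of pushed-off bicorns in minimal position in $N\setminus P'$, while ensuring each $a_i'$ stays non-separating and isotopic to its bicorn. The low-genus situation, where one intersection point is permitted and monogons occur on $\rp$ and the Möbius strip, needs the extra care of puncturing monogons as well as bigons. We would check this against the bigon criterion adapted to non-orientable surfaces.
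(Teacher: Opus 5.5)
The paper gives no argument for this lemma beyond the pointer to Lemma 4.2 of \cite{BHW2}, so your outline has to stand on its own. Its architecture is the natural one: perturb the bicorns, add punctures to reach minimal position, transfer distances with Lemma~\ref{lem:approx-by-NC}, and use the uniform constants of Theorem~\ref{thm:unif-hyperbolicity}. But Step~3 has a genuine gap.

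The second part of Theorem~\ref{thm:unif-hyperbolicity} only says that the \emph{set} of bicorns of $\alpha$ and $\alpha'$ is Hausdorff close to a geodesic. A bounded-jump sequence in that neighbourhood can run from $\alpha$ to near $\alpha'$, return to near $\alpha$, and then go to $\alpha'$. It stays Hausdorff close to the geodesic but is not an unparametrised quasi-geodesic. Your sentence that ``any backtracking is bounded by the Hausdorff constant'' is precisely what the lemma adds to Theorem~\ref{thm:unif-hyperbolicity}, and it is left unjustified.

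The missing idea is to apply the theorem to sub-pairs. Each tail $(a_j)_{j\geq i}$ is itself a sequence of bicorn surgeries of $a_i$ towards $\alpha'$, so up to your perturbation its members are bicorns of $a_i'$ and $\alpha'$. Applying Theorem~\ref{thm:unif-hyperbolicity} to the pair $(a_i',\alpha')$ requires these two curves to be in minimal position in $N\setminus P'$. With $H$ the uniform Hausdorff constant and distances in $\ncg(N\setminus P')$, it gives for all $j\geq i$
\[ d(a_i,a_j)+d(a_j,\alpha')\leq d(a_i,\alpha')+2H . \]
So $j\mapsto d(a_j,\alpha')$ is coarsely non-increasing, and $d(a_i,a_j)$ equals $d(a_i,\alpha')-d(a_j,\alpha')$ up to $2H$. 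Reparametrising at the indices where this function first drops by a fixed amount then yields a uniform quasi-geodesic. All three distances must be transferred to $\fncg(N)$ with \emph{equality} via Lemma~\ref{lem:approx-by-NC}. That is why you need the whole family in minimal position for one puncture set.

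Steps~1 and~2 also need repair. In $N$ every non-separating curve is one-sided: it represents the generator of $H_1(N;\ZZ/2)\cong\ZZ/2$, on which the orientation character is non-trivial. Adding punctures does not change which curves separate. Any two such curves have odd mod-$2$ intersection, so a push-off $a_i'$ disjoint from $a_i$ does not exist. You must take $a_i'$ crossing $a_i$ once inside a M\"obius band neighbourhood. That is still an edge under the genus-one convention, and still isotopic to $a_i$ if the neighbourhood contains no puncture.

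More seriously, the two claims of Step~2 can conflict, which is the obstacle you flag but do not resolve. A bigon between $a_i'$ and $\alpha$ can be a thin strip lying between a bicorn and its push-off; the same can happen for $a_i'$ and $a_j'$ when $a_i,a_j$ share an arc. For example, $a_i'$ may run beside the $\alpha$-arc of $a_i$ and cross $\alpha$ just past both endpoints. Puncturing such a bigon destroys the isotopy between $a_i'$ and $a_i$ in $N\setminus P'$. So the sides of the perturbations must be chosen so that such thin bigons never occur. Finally, two distinct simple closed curves never form monogons, so only the bigon criterion is relevant.
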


\section{Stable unboundedness}
Most of this section closely follows sections 4 and 5 of \cite{BHW2}, with the exception of section \ref{subsec:existenceofpA}.
Our goal here is to show that the techniques from \cite{BHW2} largely still work, with minor modifications to suit the specific surfaces we are interested in.

\subsection{Boundary points from foliations}\label{subsec:construct-bdary-points}
In this section, we describe how boundary points of the fine curve graph are determined by ending foliations, as well as sequences converging to such boundary points and homeomorphisms fixing them.
This is made precise in Theorem 4.1 of \cite{BHW2}:

\begin{theorem}[cf. Theorem 4.1 in \cite{BHW2}]\label{thm:convergence-criterion}
Suppose $(\mathcal{F}_v,\mathcal{F}_h,d)$ is a BSF structure with ending $\mathcal{F}_v$.
Then there is a point $\tau_{\mathcal{F}_v}$ on the Gromov boundary of $\fncg(N)$ with the following properties: \begin{enumerate}
    \item The boundary point $\tau_{\mathcal{F}_v}$ depends only on the foliation $\mathcal{F}_v$.
    \item A homeomorphism of $N$ fixes $\tau_{\mathcal{F}_v}$ as a point on the Gromov boundary if and only if it preserves the foliation $\mathcal{F}_v$.
    \item A sequence of curves $(\beta_i)$ in $\fncg(N)$ converges to $\tau_{\mathcal{F}_v}$ if and only if 
    the following conditions are satisfied: \begin{itemize}
        \item The sizes of the $\beta_i$ diverge to infinity.
        \item For any $B$, $\epsilon$ there is a number $I$ so that if $b\subseteq \beta_i$ with $i>I$ is a segment of size at most $B$, then it $\epsilon$-fellow travels a leaf segment of $\mathcal{F}_v$ with respect to the metric $d$.
    \end{itemize}
     
\end{enumerate}
\end{theorem}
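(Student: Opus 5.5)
I plan to follow the proof of Theorem 4.1 in \cite{BHW2}: I will construct $\tau_{\mathcal{F}_v}$ explicitly from a sequence of curves, prove the convergence criterion (3), and then get (1) and (2) from it. Throughout I will work in a fixed $P$-resolving covering $\Sigma\rightarrow N$, as set up in the BSF subsection. There closed geodesics do not bound bigons, the lifted vertical foliation is still ending, and Lemma \ref{lem:target} applies.

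\textbf{Step 1: construction.} Choose a sequence of non-separating closed curves $\beta_i$ on $N$, avoiding the 1-prongs, formed by long vertical leaf segments closed up by short horizontal arcs. Minimality of $\mathcal{F}_v$ makes this possible. On $\rp$ (or the Möbius strip), non-separating means one-sided, and I would choose the closing arc so that the curve is one-sided; if needed I would use the orientation-reversing holonomy along the leaf. These curves have diverging size and satisfy the fellow-travelling condition of (3). To see that $(\beta_i)$ is admissible, I would take a finite puncture set $P$ so that $\beta_i$ and $\beta_j$ are in minimal position in $N\setminus P$; the singular flat metric together with the $P$-resolving cover gives this. The Gromov product $(\beta_i\cdot\beta_j)_{\beta_0}$ is then controlled by the bicorn paths from $\beta_i$ to $\beta_j$, which are uniform quasi-geodesics by Lemma \ref{lem:nice-bicorn-paths}. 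A bicorn close to $\beta_0$ would consist of an arc of $\beta_i$ and an arc of $\beta_j$ of bounded size meeting $\beta_0$ in a bounded way. But both arcs fellow travel long vertical leaf segments. By Lemma \ref{lem:target}, together with Lemma \ref{lem:covering-criterion} to turn a bounded distance into a disjointness statement on a cover, such arcs meet any fixed curve of positive width. This forces the bicorns to be far from $\beta_0$, so the Gromov product tends to infinity. I define $\tau_{\mathcal{F}_v}$ as the class of $(\beta_i)$.

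\textbf{Step 2: the criterion (3).} For ``if'': any sequence satisfying the two conditions can be interleaved with $(\beta_i)$, and the Step 1 argument applies verbatim to the mixed pairs, so the sequences are equivalent. For ``only if'': suppose $\alpha_i\to\tau_{\mathcal{F}_v}$ but the sizes of $\alpha_i$ stay bounded along a subsequence, or some segment of bounded size fails to $\epsilon$-fellow travel a vertical leaf. In the second case the segment has horizontal width at least $\epsilon$ and bounded size. Lemma \ref{lem:target} then forces every $\beta_j$ to intersect it a bounded number of times per length $L$. With Lemma \ref{lem:upper-bound-intersection-no} and the covering criterion (Lemma \ref{lem:covering-criterion}), I would produce a point at uniformly bounded distance from both $\alpha_i$ and the bicorn paths toward $\beta_0$. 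That would bound the Gromov product, a contradiction.

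\textbf{Step 3: (1) and (2).} For (1), the criterion in (3) depends only on $\mathcal{F}_v$ up to the choice of metric. Two BSF structures with the same vertical foliation give equivalent notions of size and fellow travelling on compact sets, so they give the same point. For (2), if $f$ preserves $\mathcal{F}_v$, then $f(\beta_i)$ still satisfies the criterion after pulling back the BSF structure, which has the same $\mathcal{F}_v$; so $f$ fixes $\tau_{\mathcal{F}_v}$. Conversely, if $f$ fixes $\tau_{\mathcal{F}_v}$, then $f(\beta_i)$ fellow travels both $f(\mathcal{F}_v)$ and $\mathcal{F}_v$. Long segments of $\beta_i$ approximate arbitrary leaf segments by minimality, so $f$ maps leaves to leaves.

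I expect the main obstacle to be Step 2 ``only if'', that is, turning a single wide, short segment into a uniform bound on Gromov products. This is exactly where \cite{BHW2} uses the resolving covering and the covering criterion. The genus-one setting requires checking that the elevations in Lemma \ref{lem:covering-criterion} can be taken to be non-separating, and that one-sidedness of the curves does not obstruct the surgeries.
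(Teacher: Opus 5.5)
Your proposal has genuine gaps, and the first one is in Step~1, at the point where the paper relies on an idea you do not use. You infer that a bicorn close to the base point is made of arcs of bounded size. In $\fncg(N)$, bounded distance gives no control on size: Lemma~\ref{lem:size-bounds-distance} only goes the other way, and a curve meeting $\beta_0$ at most once can have arbitrarily large size. Passing to the cover of Lemma~\ref{lem:covering-criterion} does give disjoint elevations. To contradict that, however, every bicorn of $\beta_i,\beta_j$ would have to contain a long vertical arc whose lifts hit every elevation of $\beta_0$. Bicorns whose two arcs are both short (e.g.\ near the horizontal closing arcs) are not handled, and curves built from the same vertical leaves need not even be transverse.

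The paper instead works with curves \emph{disjoint} from long leaf segments. The sets $\mathcal{D}(L)$ are invariant under bicorn surgery, hence uniformly quasi-convex by Lemma~\ref{lem:nice-bicorn-paths}. Moreover $d(\omega,\mathcal{D}(L_i))\to\infty$: a suitable lift of $L_i$ together with any elevation of $\omega$ fills the cover, whereas an elevation of a curve of $\mathcal{D}(L_i)$ close to $\omega$ would avoid both (Proposition~\ref{prop:foliations-give-bdary-pts}). Since $\mathcal{D}(L)$ only involves leaf segments of $\mathcal{F}_v$, this gives part (1) and the easy half of (2) immediately. Your route to (1), via ``equivalent notions of size'' for two BSF structures with the same vertical foliation, is not justified, because size is measured with the transverse measures of the particular structure. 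Likewise, your interleaving argument for the ``if'' half of (3) is not ``verbatim'': the Step~1 reasoning used the special shape of your $\beta_i$ and inherits its gap.

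In Step~2 the core of the small width lemma (Lemma~\ref{lem:small-width}) is missing. To bound $(\alpha_i\cdot\tau_{\mathcal{F}_v})_\omega$ you need a vertex that is at bounded distance from the base point and lies on a uniform quasi-geodesic from $\alpha_i$ to curves converging to $\tau_{\mathcal{F}_v}$. A point near $\alpha_i$ does not help, and Lemma~\ref{lem:upper-bound-intersection-no} concerns homotopy classes in $\ncg$, not vertices of $\fncg$. The paper produces this vertex in four steps:
\begin{enumerate}
\item Lemma~\ref{lem:target} gives \emph{at least} one hit in every vertical stretch of length $L$, not a bounded number. It yields first-return vertical arcs of length at most $L$ from the wide segment $b$.
\item Lemma~\ref{lem:bicorn-existence} shows, by a homological argument, that for a suitable starting point on $b$ the curve $\gamma$ formed by such an arc $\lambda_0$ and a subarc of $b$ is non-separating. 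This is exactly the one-sidedness issue you leave open.
\item $\gamma$ has bounded size, so $d(\gamma,\omega)\le R$ by Lemma~\ref{lem:size-bounds-distance}.
\item The comparison curves $C_k\to\tau_{\mathcal{F}_v}$ are built to contain $\lambda_0$. Then $\gamma$ is a bicorn of $\alpha_i$ and $C_k$, so it lies on the uniform quasi-geodesic of Lemma~\ref{lem:nice-bicorn-paths}.
\end{enumerate}
You also do not treat the case where the sizes stay bounded; Lemma~\ref{lem:size-bounds-distance} settles it.
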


The proof is rather long, and we prove the three statements individually.

\paragraph{Boundary points.} 
We start with the existence and dependence on the foliation.
For this, we introduce a family of subgraphs of the fine curve graph:
for $L\subset S$ any proper subset, define $\mathcal{D}(L)$ to be subset of the vertex set of $\fncg(N)$ representing the curves which are disjoint from $L$:
    \[ \mathcal{D}(L) \coloneqq \{\alpha \in \fncg(N) \mid \alpha \cup L = \emptyset \}. \]
We will also denote by $\mathcal{D}(L)$ the full subgraph spanned by the set $\mathcal{D}(L)$.
These subgraphs are $K$-quasi-convex, with $K$ independent of $L$, by Lemma \ref{lem:nice-bicorn-paths}, as any $\mathcal{D}(L)$ is invariant under bicorn surgery.

\begin{proposition}[Proposition 4.4 in \cite{BHW2}]\label{prop:foliations-give-bdary-pts}
Consider an ending foliation $\mathcal{F}$ coming from a BSF structure.
Then, there is a unique point of the Gromov boundary $\tau_\mathcal{F}\in \partial_\infty\fncg(N)$ contained in every $\partial_\infty \mathcal{D}(L)$ where $L$ is a finite union of leaf segments of $\mathcal{F}$.

In fact, if $L_i$ is a sequence of leaf segments of unboundedly increasing length, then any sequence $\alpha_i\in \mathcal{D}(L_i)$ is an admissible sequence defining the boundary point $\tau_\mathcal{F}$.
\end{proposition}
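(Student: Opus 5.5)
The strategy follows Proposition 4.4 of \cite{BHW2}: show that the subgraphs $\mathcal{D}(L)$, for $L$ a long leaf segment of $\mathcal{F}$, are quasi-convex sets that move off to infinity, so they shrink to a single boundary point. Concretely, fix a base curve $w\in\fncg(N)$ and a sequence of leaf segments $L_i$ of $\mathcal{F}$ whose lengths increase without bound. The key claim is that $d_{\fncg(N)}(w,\mathcal{D}(L_i))\to\infty$. Quasi-convexity (uniform constant $K$, from Lemma \ref{lem:nice-bicorn-paths}) then turns this into divergence of Gromov products: for $\alpha\in\mathcal{D}(L_i)$ and $\beta\in\mathcal{D}(L_j)$ I want $(\alpha\cdot\beta)_w\to\infty$ as $i,j\to\infty$.

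\textbf{Step 1: distance to $\mathcal{D}(L_i)$ diverges.} Suppose instead that $d(w,\alpha_i)\le K'$ for some $\alpha_i\in\mathcal{D}(L_i)$ along a subsequence. Apply the covering criterion, Lemma \ref{lem:covering-criterion}, with constant $K'$, taking the covering to also be $P$-resolving and characteristic after passing to a further finite cover. After a small perturbation so that $w$ and $\alpha_i$ avoid the branch locus, elevations $\tilde w$ and $\tilde\alpha_i$ are disjoint. Now $\tilde w$ is a fixed compact curve, so a suitable arc of it has positive horizontal width $\epsilon$ and bounded size $B$. Lemma \ref{lem:target} gives $L$ such that every vertical leaf segment of length at least $L$ meets $\tilde w$. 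Lifting $L_i$ gives long leaf segments $\tilde L_i$ that are disjoint from $\tilde\alpha_i$. To reach a contradiction, I would run the argument the other way: the long segment $\tilde L_i$ together with $\tilde w$ should force every curve disjoint from $\tilde w$ to cross $\tilde L_i$. If that fails, I would use the nested intersection pattern from the proof in \cite{BHW2}. This is the step where the non-orientable setting needs care, and it is why we pass to the orientable $P$-resolving cover, where the lemma is already available.

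\textbf{Step 2: nestedness and Gromov products.} For segments $L\subseteq L'$ we have $\mathcal{D}(L')\subseteq\mathcal{D}(L)$. The segments can be chosen nested (extend a fixed leaf), and any leaf segment is close to a subsegment of a nested family because the foliation is minimal. So for $i\le j$ both $\alpha_i$ and $\alpha_j$ lie in the $K$-quasi-convex set $\mathcal{D}(L_i)$. A geodesic from $\alpha_i$ to $\alpha_j$ therefore stays within $K$ of $\mathcal{D}(L_i)$, which is far from $w$ by Step 1, and standard $\delta$-hyperbolic estimates then give $(\alpha_i\cdot\alpha_j)_w\ge d(w,\mathcal{D}(L_i))-K-O(\delta)\to\infty$. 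So every such sequence is admissible. Interleaving two sequences shows they are equivalent, which makes the limit point independent of the choices.

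\textbf{Step 3: membership in every $\partial_\infty\mathcal{D}(L)$, and uniqueness.} Given a finite union $L$ of leaf segments, minimality lets me extend a single leaf segment $L_i$ so that it contains, or fellow-travels, all of $L$. In practice I take $\mathcal{D}(L_i)\subseteq\mathcal{D}(L)$, so $\tau_\mathcal{F}\in\partial_\infty\mathcal{D}(L)$. For uniqueness, suppose $\tau'$ lies in every such boundary. Then $\tau'$ is represented by sequences in $\mathcal{D}(L_i)$ for each $i$, and by quasi-convexity these come within $K$ of $\mathcal{D}(L_i)$ arbitrarily far out. A diagonal sequence taken from points in $\mathcal{D}(L_i)$ then converges both to $\tau'$ and to $\tau_\mathcal{F}$, so $\tau'=\tau_\mathcal{F}$. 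Independence from the choice of leaf is handled the same way. The main obstacle is Step 1, and in particular making the target lemma produce intersections in the right direction after passing to the branched cover.
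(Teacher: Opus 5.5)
Your outline has the same architecture as the paper's proof. You show that $d(w,\mathcal{D}(L_i))$ diverges via the covering criterion, get admissibility from uniform quasi-convexity, and get uniqueness from a diagonal argument. However, Step~1 is not carried out, and it is the only non-formal step.

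\textbf{The gap in Step~1.} Once Lemma~\ref{lem:covering-criterion} gives disjoint elevations $\tilde w$ and $\tilde\alpha_i$, you need a reason why an essential curve disjoint from $\tilde w$ and from every lift of $L_i$ cannot exist. You only say this ``should'' happen.

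\begin{itemize}
\item The tool you reach for does not supply it. Lemma~\ref{lem:target}, applied to an arc of the fixed curve $\tilde w$, gives intersections of long leaf segments with $\tilde w$. The contradiction instead needs $\tilde L_i$ to meet $\tilde\alpha_i$.
\item Applying the target lemma to $\tilde\alpha_i$ would require a segment of $\tilde\alpha_i$ of uniformly bounded size and definite width. Nothing prevents $\tilde\alpha_i$ from running nearly parallel to the leaves.
\item The results that control such behaviour (Lemma~\ref{lem:small-width}, Theorem~\ref{thm:convergence-criterion}) are themselves proved using the present proposition, so they cannot be invoked here.
\end{itemize}

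The missing idea is the filling statement used in the paper. Fix the finite cover. By minimality of the foliation and $\operatorname{length}(L_i)\to\infty$, for large $i$ there is a lift $\overline{L_i}$ such that $\overline{L_i}\cup\overline{w}$ fills the cover for every elevation $\overline{w}$ of $w$. The elevation of the non-separating curve $\alpha_i$ is essential and disjoint from $\overline{w}$ and from all lifts of $L_i$, which gives the contradiction. Deferring to ``the nested intersection pattern from \cite{BHW2}'' leaves exactly this step open.

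\textbf{A false step in Step~3, which also affects the non-nested case of Step~2.} A finite union $L$ of leaf segments generally meets several distinct leaves, so no single leaf segment contains it. Fellow-travelling gives no inclusion $\mathcal{D}(L_i)\subseteq\mathcal{D}(L)$, because $\mathcal{D}(\cdot)$ is not monotone under Hausdorff approximation.

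\begin{itemize}
\item The paper instead takes $\alpha_i\in\mathcal{D}(L\cup L_i)\subseteq\mathcal{D}(L)\cap\mathcal{D}(L_i)$. This set is nonempty because the components of $L\cup L_i$ are trees (there are no closed leaves and pseudo-leaves are simply connected). The resulting sequence converges to $\tau_\mathcal{F}$ and lies in $\mathcal{D}(L)$, so $\tau_\mathcal{F}\in\partial_\infty\mathcal{D}(L)$.
\item For an arbitrary sequence of segments of diverging length, compare $\alpha_i\in\mathcal{D}(L_i)$ and $\alpha_j\in\mathcal{D}(L_j)$ through a curve in $\mathcal{D}(L_i\cup L_j)$. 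Do not appeal to closeness to a nested family.
\item The filling argument of Step~1 does not use nestedness. So it shows $d(w,\mathcal{D}(L))\to\infty$ as the length of $L$ grows, and your Gromov product estimate then goes through.
\end{itemize}
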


Set $\omega$ as a base point in $\fncg(N)$; 
we will keep this notation for a choice of base point throughout the rest of the paper.
Consider an increasing sequence $L_1 \subset L_2 \subset \dots$ of leaf segments of diverging length.

We claim that the distance of $\mathcal{D}(L_i)$ to the base point is unboundedly increasing.
To see this, we will argue by contradiction:
if there was a bound on the distances of $\mathcal{D}(L)$ to $\omega$, then by Lemma \ref{lem:covering-criterion} for any $ \delta \in \mathcal{D}(L) $ there would exist a finite branched covering of $N$ with disjoint elevations of $\hat\delta$ and $\omega$.
But for any finite covering, the minimality and increasing length of the $L_i$ imply that for large $i$ there exists a lift $\overline{L_i}$ such that $\overline{L_i}\cup \overline{\omega}$ is filling for any lift $\overline{\omega}$, leading to a contradiction.

Now let $\alpha_i$ be a non-separating curve in $\mathcal{D}(L_i)$.
All $\mathcal{D}(L_i)$ are quasi-convex, so the distance of $\mathcal{D}(L_i)$ to the base point gives us a lower bound on the Gromov product of two curves in $\mathcal{D}(L)$.
Therefore we have an admissible sequence $\alpha_i$ which defines a boundary point $\tau$.
The same argument shows that a different choice of $\alpha_i$ would yield an equivalent admissible sequence, which shows that this boundary point is the only one contained in the boundaries of all $\mathcal{D}(L_i)$.
If now $L$ is any finite union of leaf segments, we can take a sequence as before with $\alpha_i\in \mathcal{D}(L\cup L_i)$ to finish the proof.

\paragraph{Approaching a boundary point.}
The next step is to characterise sequences of curves which converge to a boundary point determined by an ending foliation.
In this part, we assume that all curves are pairwise in general position.

First, we show two technical lemmas:

\begin{lemma}[Lemma 4.7 in \cite{BHW2}]\label{lem:size-bounds-distance}
Suppose we have a BSF structure on $N$, and let $\alpha$ be a non-separating curve.
Then for any $L>0$, there is an $R>0$ so that the following is true:
if $\beta$ is a curve of size at most $L$, then 
\[ d_{\fncg}(\alpha,\beta) \leq R. \]
\end{lemma}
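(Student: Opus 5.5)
We argue by contradiction via compactness. The key quantity is the ``size'' of a curve, which I take, as in \cite{BHW2}, to be measured through the metric $d$ of the BSF structure in a fixed $P$-resolving covering $\Sigma\to N$; in particular a curve of size at most $L$ has length (or diameter of lifts) bounded in terms of $L$. The plan is to reduce to the homotopical curve graph of a punctured surface and then use the intersection bound of Lemma~\ref{lem:upper-bound-intersection-no}, via the approximation Lemma~\ref{lem:approx-by-NC}.

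First, since $\alpha$ is fixed and $N$ is compact, cover $N$ by finitely many small flat discs (or discs around cone points) of diameter much less than $L$. A curve $\beta$ of size at most $L$ can be perturbed within distance $1$ in $\fncg(N)$ (by choosing a disjoint nearby non-separating curve, or a curve meeting $\beta$ at most once as allowed in genus $1$) to a curve $\beta'$ that is transverse to $\alpha$ and is a piecewise geodesic made of a bounded number $m(L)$ of segments, each of length bounded in terms of $L$. Then $i(\alpha,\beta')$ is bounded by a constant $I(L)$: each short geodesic segment meets the fixed (after also replacing $\alpha$ by a nearby transverse geodesic-polygonal curve $\alpha'$ at distance $\le 1$) polygonal curve $\alpha'$ a bounded number of times, since two geodesic segments in a flat chart meet at most once away from overlap, and $\alpha'$ has finitely many segments.

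Second, I would convert the intersection bound into a distance bound. Choose a finite set $P$ (e.g. the cone points plus extra punctures) disjoint from $\alpha',\beta'$; the fine distance satisfies $d_{\fncg(N)}(\alpha',\beta') \le d_{\ncg(N\setminus P')}$ for a $P'$ putting them in minimal position as in Lemma~\ref{lem:approx-by-NC}, but more simply: curves in minimal position in $N\setminus P'$ have homotopical intersection equal to $|\alpha'\cap\beta'|\le I(L)$, so Lemma~\ref{lem:upper-bound-intersection-no} gives $d\le 2I(L)+1$ in $\ncg(N\setminus P')$, and Lemma~\ref{lem:approx-by-NC} transfers this to $\fncg(N)$. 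One must arrange minimal position by adding punctures inside each bigon between $\alpha'$ and $\beta'$, which does not increase the intersection count. Altogether $R = 2I(L)+3$ works.

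The main obstacle is the first step: making the perturbation to a polygonal curve with a uniformly bounded number of segments while keeping it non-separating and within bounded fine distance, especially near $1$-prongs where geodesics misbehave. I would handle this by working in the $P$-resolving cover (where, by Lemma 2.7 of \cite{BHW2}, geodesics form no bigons) to count intersections of lifts, and pushing the bound back down since the covering has finite degree.
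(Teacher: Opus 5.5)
Your second step is sound. For transverse non-separating curves, putting a puncture in every bigon puts them in minimal position, and Lemmas~\ref{lem:approx-by-NC} and~\ref{lem:upper-bound-intersection-no} then give $d_{\fncg(N)}(\alpha',\beta')\leq 2\lvert\alpha'\cap\beta'\rvert+1$.

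The gap is your first step. It carries the whole content of the lemma, and as stated it is false. Size is a diameter-type quantity for lifts to the universal cover $\tilde\Sigma$ of the resolving cover. You can see this from how the lemma is used in the Small Width Lemma, where the bicorn contains an arbitrary subsegment, of size at most $B$, of an arbitrary curve. Size therefore bounds neither the length of $\beta$, nor its number of geodesic pieces, nor $\lvert\alpha\cap\beta\rvert$.

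Fine distance at most $1$ means disjoint, or in genus one meeting at most once. So the fine neighbours of a wiggly curve are forced to be wiggly too. Here is an example on $\rp$:
\begin{itemize}
\item Take a projective line and push out $k\geq 3$ thin fingers which, inside a small disc containing at most one $1$-prong, spiral $n$ times around a common centre.
\item The size of the resulting curve $\beta$ stays bounded as $n\to\infty$, because the spiralling happens in a disc whose lifts to $\tilde\Sigma$ have bounded diameter.
\item A non-separating curve meeting $\beta$ at most once must cross it exactly once. The remaining arc then has to join the two sides of $\beta$ inside the open disc $\rp\setminus\beta$.
\item The fingers force this arc through a spiral channel, so its length and its number of geodesic pieces grow with $n$.
\end{itemize}
So no bound $m(L)$ exists. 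Passing to the resolving cover does not help: Lemma 2.7 of \cite{BHW2} controls intersections of geodesics, but $\beta$ is not geodesic and cannot be replaced by a tame curve at fine distance $1$.

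The missing idea is the one in the paper's sketch: use the size bound only through lifts. A lift of $\beta$ of bounded diameter meets boundedly many lifts of $\alpha$, with the bound depending on $L$ and $\alpha$. If $\alpha$ and $\beta$ form no bigons, each such lift is crossed at most once. Then $\lvert\alpha\cap\beta\rvert$ is bounded in terms of $L$ and $\alpha$, and your second step applies directly to $\alpha$ and $\beta$ without any perturbation.

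If there are bigons, they must first be removed by surgeries. Note that even a bound on length would not bound the intersection number, because of bigons. The real point is to show that a number of surgeries bounded in terms of $L$ and $\alpha$ suffices, each moving a bounded distance in $\fncg(N)$. This cannot be one surgery per bigon, since there may be arbitrarily many bigons. Nothing in your proposal plays this role, and the compactness argument you announce is never actually used.
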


The idea here is that for a configuration without bigons formed by $\alpha$ and $\beta$, we can bound their distance by the number of their intersection points, which in turn can be bounded by their sizes.
If there are bigons, we can remove them first with a bounded number of surgeries which only depends on the size of $\beta$ and the curve $\alpha$.

\begin{lemma}[Lemma 4.8 in \cite{BHW2}]\label{lem:bicorn-existence}
Assume that $\mathcal{F}= \mathcal{F}_v$ is the vertical foliation of a BSF structure that is ending.
Suppose that $b$ is a compact embedded segment which is not vertical.
Then for some $p\in b$, the curve formed by a subarc of $b$ and a first return vertical flow line starting in $p$ is non-separating.
\end{lemma}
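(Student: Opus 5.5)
We prove Lemma \ref{lem:bicorn-existence} in three steps: first build a one-parameter family of candidate curves, then show that if all of them were separating, $b$ would be essentially disjoint from a dense leaf, and finally use endingness to reach a contradiction.

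\emph{Step 1: the candidate curves.} Since $b$ is not vertical, we may shrink it to a subsegment transverse to $\mathcal{F}_v$ away from the singularities. This uses only that $b$ is not contained in a vertical leaf, together with the local product structure of the BSF structure. Non-separating curves built from a subarc of this smaller segment are also of the required form, so the replacement is harmless. Because $\mathcal{F}_v$ is minimal, every regular vertical leaf is dense. Hence for every $p\in b$ whose vertical leaf avoids the finitely many singular leaves, the vertical flow line from $p$ returns to $b$ at some first point $p'$. Let $c_p$ be the curve formed by the vertical segment $[p,p']$ together with the subarc $[p',p]\subseteq b$. For generic $p$ this is a simple closed curve, by the first-return property.

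\emph{Step 2: if every $c_p$ separates, $b$ misses a dense leaf.} Suppose for contradiction that $c_p$ is separating for every such $p$. Then $c_p$ has $\ZZ/2$-intersection number zero with every closed curve. In particular $c_p$ is two-sided, so the transverse orientation of $\mathcal{F}_v$ along $b$ is preserved on first return; in the non-orientable setting this is where one-sided curves would automatically be non-separating. To exploit this, we cut $N$ along $b$ and look at the first return map $T\colon b\to b$. This is an interval exchange, possibly with flips, defined by the vertical foliation. Separation of every $c_p$ means that each return crosses $b$ in the same direction. It also forces the homology classes $[c_p]\in H_1(N;\ZZ/2)$ to vanish. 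Because the $c_p$ for different branches of $T$ generate the image of the first return cycles in homology, the vertical foliation would then be carried in a neighbourhood of $b$ plus the leaves through $b$, and this neighbourhood would have trivial $\ZZ/2$-homology image. Taking the union of all leaf segments between consecutive returns, we obtain a subsurface $Y$ containing $b$ and a dense leaf whose closure is all of $N$. Every closed curve in $Y$ would be separating in $N$. On $\rp$ or the Möbius strip, which contain non-separating one-sided curves, this is impossible, because $Y$ must carry a non-separating curve: the closure of a dense leaf is all of $N$.

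\emph{Step 3: using endingness, and the main obstacle.} Making Step 2 rigorous is the main obstacle. A cleaner route is to pass to the $P$-resolving covering $\Sigma$, where the foliation is orientable near lifts of $b$ and ending. There we argue as in Lemma 4.8 of \cite{BHW2}: take two returns $p,q$ whose vertical segments cross, so that the resulting curves $c_p$ and $c_q$ intersect once transversely. A curve meeting another curve exactly once is non-separating. The existence of such crossing returns follows from minimality: the return map cannot be a pure translation without creating a closed invariant annulus, which would contradict endingness. We then project back to $N$. Since one transverse intersection persists mod $2$ downstairs (we choose $p$ and $q$ in a single sheet), $c_p$ has odd intersection with $c_q$ in $N$, so $c_p$ is non-separating.
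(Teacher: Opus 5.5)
There is a genuine gap, and it lies in Step~3, the part you actually rely on. For $N=\rp$ or the once-punctured $\rp$, an Euler characteristic count shows that a simple closed curve is non-separating if and only if it is one-sided. A one-sided curve avoiding the branch locus is never the homeomorphic image of a closed curve in the orientable cover $\Sigma$, because its Möbius band neighbourhood would lift homeomorphically.

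Suppose ``choosing $p$ and $q$ in a single sheet'' means that $\hat c_p,\hat c_q$ are closed curves projecting homeomorphically onto $c_p,c_q$. Then $c_p$ and $c_q$ are two-sided, hence separating, and their mod-$2$ intersection number is $0$ whatever happens upstairs. So the claim that one transverse intersection ``persists mod $2$'' is false: intersection points of $c_p$ and $c_q$ correspond to points of $\hat c_p\cap h\hat c_q$ for \emph{all} deck transformations $h$, not just $h=1$. If instead $\hat c_p$ is a first return to one lift $\hat b_1$ whose vertical part crosses other lifts of $b$, its projection is neither simple nor a first-return curve as in the lemma. Either way the descent cannot produce the curve you need, which on $N$ is necessarily a one-sided first-return loop.

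Independently, your argument for the existence of a crossing pair treats the return map as an interval exchange without flips. This is unjustified, because the lifted foliation need not be transversely orientable (for instance when $\mathcal{F}_v$ has $3$-prongs).

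The missing idea is the homological argument the paper alludes to, carried out directly on $N$ with no covering. Your Step~2 points this way, but it replaces the key step with ``the closure of a dense leaf is all of $N$'', which says nothing about homology. The argument runs as follows.
\begin{enumerate}
\item Take $b$ transverse to $\mathcal{F}_v$ and away from the singularities.
\item By minimality, cutting $N$ along $b$ and along the finitely many vertical segments running from the singular points and from the endpoints of $b$ to their first hits on $b$ leaves finitely many open rectangles, each foliated by first-return segments.
\item Let $U$ be a regular neighbourhood of $b$ together with the cores of these rectangles. Then $N\setminus U$ is a union of discs, namely neighbourhoods of the finite graphs formed by those vertical segments. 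These graphs are trees because pseudo-leaves of an ending foliation are simply connected. Each disc is attached along its entire boundary; in the punctured case one disc becomes a punctured disc, which changes nothing.
\item Hence $H_1(U;\ZZ/2)\to H_1(N;\ZZ/2)$ is surjective, so the classes $[c_p]$ generate $H_1(N;\ZZ/2)\cong\ZZ/2\neq 0$.
\item A simple closed curve in $N$ separates if and only if it is zero in $H_1(N;\ZZ/2)$, so some $c_p$ is non-separating.
\end{enumerate}
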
    

For the proof, consider the isotopy classes of first return maps for different points on $b$.
By a homological argument, at least one of them needs to be non-separating. 

Now we are ready for:

\begin{lemma}[Lemma 4.6 (Small Width Lemma) in \cite{BHW2}]\label{lem:small-width}
Consider an ending foliation $\mathcal{F}_v$ coming from a BSF structure.
For any $\epsilon$, $B$ there is a $K$ so that the following holds.
If $\beta$ is a simple closed curve and the Gromov product satisfies $(\beta\cdot \tau_{\mathcal{F}_v})_{\omega}>K$, then every segment of $\beta$ of size $B$ has total horizontal width at most $\epsilon$, and thus $\epsilon$-fellow travels a (possibly singular) leaf of $\mathcal{F}_v$.
\end{lemma}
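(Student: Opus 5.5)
We argue by contraposition: if some segment $b\subseteq\beta$ of size at most $B$ has total horizontal width larger than $\epsilon$, we produce a curve $\gamma$ that is close to $\beta$ in $\fncg(N)$ and close to a fixed curve, and hence bound $(\beta\cdot\tau_{\mathcal{F}_v})_\omega$ from above by a constant $K$ depending only on $\epsilon$, $B$, the BSF structure and $\omega$.

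\textbf{Step 1: build the surgered curve.} Since $b$ has width $>\epsilon$, it is not vertical. We may shrink $b$ to a subsegment of width about $\epsilon$, still of size at most $B$, so Lemma \ref{lem:target} applies: there is $L=L(B,\epsilon)$ such that every vertical geodesic of length $\geq L$ meets $b$. (To keep both the vertical trajectories and the segment compatible with the ending property, it is safest to do this in the $P$-resolving covering $\Sigma$ and push the conclusion back down to $N$.) By Lemma \ref{lem:bicorn-existence} there is a point $p\in b$ whose first-return vertical flow line $v$ to $b$, together with a subarc $b'$ of $b$, forms a non-separating curve $\gamma=b'\cup v$. By Lemma \ref{lem:target}, $v$ has length at most $L$, so $\gamma$ has size bounded by $B+L$. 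Lemma \ref{lem:size-bounds-distance} then gives $d_{\fncg}(\omega,\gamma)\leq R$, with $R$ depending only on $B,\epsilon$ (after a small perturbation so that $\gamma$ is in general position with $\omega$).

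\textbf{Step 2: relate $\gamma$ to $\beta$ and to $\tau_{\mathcal{F}_v}$.} The curve $\gamma$ shares the arc $b'$ with $\beta$, and otherwise it runs along the vertical segment $v$. Push $\gamma$ slightly off $b'$ to the side of $\beta$ on which $v$ leaves. Choose a leaf segment $\ell$ of $\mathcal{F}_v$ of length $\geq n$, chosen compatibly, and take curves $\alpha_n\in\mathcal{D}(\ell)$ of the kind produced in Proposition \ref{prop:foliations-give-bdary-pts}. The intersection of $\gamma$ with $\alpha_n$ and with $\beta$ should be controlled by the intersections of $v$, which is a bounded piece of a vertical leaf. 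Combining this with quasi-convexity of $\mathcal{D}(v)$ and Lemma \ref{lem:nice-bicorn-paths}, the aim is to show that any geodesic from $\beta$ to $\tau_{\mathcal{F}_v}$ passes uniformly close to $\gamma$, or equivalently that $\min\{(\beta\cdot\gamma)_\omega,(\gamma\cdot\tau)_\omega\}$ is controlled. Since $d(\omega,\gamma)\leq R$, hyperbolicity then yields $(\beta\cdot\tau_{\mathcal{F}_v})_\omega\leq R+O(\delta)$, so $K=R+O(\delta)$ works.

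\textbf{Expected obstacle.} The hard step is the second one: showing that the short non-separating curve $\gamma$ genuinely separates $\beta$ from the boundary point in the coarse sense. I would handle it as BHW2 does, via bicorn surgery. The curve $\gamma$ is itself a bicorn between $\beta$ and a long vertical leaf curve, so it lies on a uniform quasi-geodesic bicorn path from $\beta$ toward curves in $\mathcal{D}(L_i)$, which converge to $\tau_{\mathcal{F}_v}$ by Proposition \ref{prop:foliations-give-bdary-pts}. Checking minimal position (by adding punctures) and non-separation of the relevant bicorns is where the non-orientable, genus-one setting needs care. In that step I expect to rely on $P$-resolving coverings and Lemma \ref{lem:covering-criterion}.

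Finally, once every segment of size $B$ has width at most $\epsilon$, fellow-travelling a possibly singular leaf follows from the local structure of the flat metric.
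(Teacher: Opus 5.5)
Your proposal is correct and follows essentially the paper's argument: argue by contrapositive, build a short non-separating curve $\gamma=b'\cup\lambda_0$ from Lemma~\ref{lem:target} and Lemma~\ref{lem:bicorn-existence} with $d_{\fncg}(\gamma,\omega)\leq R$ by Lemma~\ref{lem:size-bounds-distance}, and then (as in your ``expected obstacle'' paragraph) note that $\gamma$ is a bicorn of $\beta$ with curves $C_i$ built from long vertical segments containing $\lambda_0$, so by Lemma~\ref{lem:nice-bicorn-paths} it lies on a uniform quasi-geodesic from $\beta$ to $C_i\to\tau_{\mathcal{F}_v}$, which bounds the Gromov product. The detours in your Step 2 (pushing $\gamma$ off $b'$, quasi-convexity of $\mathcal{D}(v)$) are unnecessary, and you should drop the ``equivalently $\min\{(\beta\cdot\gamma)_\omega,(\gamma\cdot\tau)_\omega\}$ is controlled'' reformulation, since that minimum is trivially at most $R$ and gives nothing; what you actually use is that $\gamma$ lies near a geodesic from $\beta$ to $C_i$, so the final $K$ also depends on the bicorn quasi-geodesic constant.
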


Here, we show the contrapositive:
if $\beta$ contains a segment $b\subset \beta$ with size bounded by $B$ and horizontal width larger than $\epsilon$, then we use Lemma \ref{lem:target} to find a vertical (first return) geodesic segment $\lambda_0$ with length at most $L=L(B,\epsilon)$, which can be chosen to define a non-separating bicorn $\gamma$ with $\beta$ by Lemma \ref{lem:bicorn-existence}. 
By construction, the size of this bicorn is bounded, so by Lemma \ref{lem:size-bounds-distance} we get the bound $d_{\fncg}(\gamma, \omega) \leq R = R(L+B)$.

We next construct a sequence $C_i$ of curves as unions of increasingly long vertical geodesic segments containing $\lambda_0$ and short horizontal geodesic segments.
These converge to $\tau_{\mathcal{F}_v}$ by Proposition \ref{prop:foliations-give-bdary-pts}.
By Lemma \ref{lem:nice-bicorn-paths}, the vertex $\gamma$ is contained in a $K$-quasi-geodesic between $\beta$ and $C_i$ for all $i$.
Together with the bound on $d_{\fncg}(\gamma, \omega)$ this implies that for all $i$ the Gromov product $(\beta\cdot \tau_{\mathcal{F}_v})_\omega$ is bounded from above, and this bound only depends on $B$, $\epsilon$ and $K$.

We can now prove the third part of Theorem \ref{thm:convergence-criterion}:
\begin{proposition}[Proposition 4.9 in \cite{BHW2}]
Suppose that a sequence of curves $\alpha_i$ converges to the boundary point $\tau_\mathcal{F}$ associated to an ending foliation in the Gromov boundary of $\fncg(N)$.
Then the size of the $\alpha_i$ converges to infinity, and convergent subsequences of segments of $\alpha_i$ limit to subsets of either a leaf of pseudo-leaf in the Hausdorff sense.
More precisely, for any $\epsilon$, $B$ and subintervals $J_i\subset \alpha_i$ of size at most $B$, any accumulation point of the $J_i$ in the Hausdorff topology is a (pseudo-)leaf segment of $\mathcal{F}$.
\end{proposition}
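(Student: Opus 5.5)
The plan is to derive both assertions from the Small Width Lemma (Lemma \ref{lem:small-width}), together with the size--distance bound of Lemma \ref{lem:size-bounds-distance}.

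\emph{Sizes diverge.} Since $\alpha_i \to \tau_\mathcal{F}$, the Gromov products $(\alpha_i\cdot\tau_\mathcal{F})_\omega$ tend to infinity. Hence $d_{\fncg}(\omega,\alpha_i)\to\infty$, because the Gromov product based at $\omega$ is bounded by the distance to $\omega$ up to $\delta$. Suppose, for contradiction, that a subsequence had size bounded by some $L$. Lemma \ref{lem:size-bounds-distance}, applied with $\alpha=\omega$, would then give $d_{\fncg}(\omega,\alpha_i)\leq R(L)$ along that subsequence, which is impossible. So the sizes diverge.

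\emph{Segments become leaf-like.} Fix $\epsilon,B$. Let $K=K(\epsilon,B)$ be given by Lemma \ref{lem:small-width}, and choose $I$ such that $(\alpha_i\cdot\tau_\mathcal{F})_\omega>K$ for all $i>I$. Then every subinterval $J_i\subset\alpha_i$ of size at most $B$ has horizontal width at most $\epsilon$ and $\epsilon$-fellow travels a (possibly singular) leaf segment of $\mathcal{F}$. If a subsequence of the $J_i$ has Hausdorff limit $J$, we use that this holds for every $\epsilon$: letting $\epsilon\to0$ along a diagonal subsequence, $J$ has horizontal width $0$.

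\emph{Identifying the limit.} It remains to turn ``width zero'' into ``contained in a leaf or pseudo-leaf segment''. We would argue locally in charts of the singular flat structure, passing to a $P$-resolving covering near $1$-prongs so that widths are measured by projection to the tree $T_v$. Away from singularities, a compact connected set of zero horizontal width lies in a single vertical leaf segment. Near a cone point, such a set is a connected subset of the star of singular leaves through that point, so it lies in a pseudo-leaf. Connectedness of $J$ follows because it is a Hausdorff limit of arcs, and compactness gives a finite cover by such charts, so $J$ is a finite union of (singular) leaf segments.

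\emph{Main obstacle.} The delicate step is this last local analysis at singularities. There, the $\epsilon$-fellow-travelled leaves may switch between prongs, and one must check that the limit stays connected inside a single pseudo-leaf. We would handle this using the real tree $T_v$ in the universal cover: a connected set projecting to a single point of $T_v$ lies in one (pseudo-)leaf preimage.
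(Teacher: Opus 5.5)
Your proposal is correct and follows the same route as the paper. Sizes diverge by Lemma~\ref{lem:size-bounds-distance} applied with $\alpha=\omega$; the Small Width Lemma~\ref{lem:small-width} then makes all bounded-size segments $\epsilon$-thin for large $i$, and you pass to Hausdorff limits. Your last step (lifting to $\tilde\Sigma$ and noting that a compact connected set with one-point projection to $T_v$ lies in a single fibre, i.e.\ a pseudo-leaf) spells out what the paper compresses into one ``therefore''. No diagonal subsequence is needed there, since along any Hausdorff-convergent subsequence the widths already tend to $0$.
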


The size of the $\alpha_i$ needs to diverge by Lemma \ref{lem:size-bounds-distance}.
This together with Lemma \ref{lem:small-width} implies that for any $L$, $\epsilon$ and large $i$, any segment of $\alpha_i$ of length $L$ is contained in an $\epsilon$-neighbourhood of a (pseudo-)leaf.
Therefore any accumulation point of any segments of bounded length of the $\alpha_i$ are contained in (pseudo-)leaf segments.

\paragraph{Fixing boundary points.}
Last, we prove the second statement of Theorem \ref{thm:convergence-criterion} by characterising stabilisers.

\begin{proposition}[Proposition 4.10 in \cite{BHW2}]
Suppose that $F\colon N\rightarrow N$ is a homeomorphism which preserves the boundary point $\tau_\mathcal{F}$ defined by a vertical foliation $\mathcal{F}$ of a BSF structure that is ending.
Then $F$ preserves the foliation $\mathcal{F}$.
\end{proposition}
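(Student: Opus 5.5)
Since $F$ acts on $\fncg(N)$ by an isometry, the natural approach is to push a convergent sequence through $F$ and read off leaves using the third part of Theorem \ref{thm:convergence-criterion}.

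First I produce sequences converging to $\tau_\mathcal{F}$ whose segments trace actual leaves. Take an increasing sequence of leaf segments $L_i$ of $\mathcal{F}$ of diverging length. Following the construction in the proof of Lemma \ref{lem:small-width}, let $C_i$ be a closed curve made of the long vertical segment $L_i$ closed up by a short horizontal segment of length $\to 0$. Lemma \ref{lem:bicorn-existence} lets me choose the closing point so that $C_i$ is non-separating. By Proposition \ref{prop:foliations-give-bdary-pts} (or directly by the third part of Theorem \ref{thm:convergence-criterion}), $C_i \to \tau_\mathcal{F}$. Since $F$ fixes $\tau_\mathcal{F}$ and acts isometrically, $F(C_i)\to\tau_\mathcal{F}$ as well. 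The same holds for the curves $C_i'$ built from any finite union of leaf segments $L$ extended to $L\cup L_i$.

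Next I fix an arbitrary compact leaf segment $\ell$ of $\mathcal{F}$ and show that $F(\ell)$ lies in a (pseudo-)leaf. Arrange that $\ell\subset C_i$ for all $i$. Then $F(\ell)\subset F(C_i)$ is a fixed segment of bounded size $B$, where the size is measured in the metric $d$ and $B$ is finite since $F$ is uniformly continuous. By the third part of Theorem \ref{thm:convergence-criterion}, applied to the sequence $F(C_i)$, the segment $F(\ell)$ $\epsilon$-fellow travels a leaf segment of $\mathcal{F}$ for every $\epsilon>0$, once $i$ is large. Since $F(\ell)$ does not depend on $i$, letting $\epsilon\to 0$ and taking a Hausdorff limit of the fellow-travelled leaf segments, which is legitimate by the refined form stated in Proposition 4.9, gives that $F(\ell)$ is contained in a (pseudo-)leaf segment of $\mathcal{F}$. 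Exhausting a leaf by compact segments then shows that $F$ maps every leaf into a leaf or pseudo-leaf. Applying the same argument to $F^{-1}$, which also fixes $\tau_\mathcal{F}$, gives the reverse inclusion, so $F$ permutes leaves and pseudo-leaves. It then has to map singular points to singular points, since those are exactly the points where pseudo-leaves branch. Hence $F$ preserves $\mathcal{F}$.

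I expect the main obstacle to be the limiting step: passing from ``$\epsilon$-fellow travels some leaf segment for every $\epsilon$'' to ``lies in a leaf''. The approximating leaf segments may differ for different $\epsilon$, and near singularities, in particular at the 1-prongs, they could switch between prongs. I would handle this by lifting to the $P$-resolving covering $\Sigma$, where $F$ lifts by Lemma \ref{lem:char-covers-have-all-lifts}. On the universal cover the leaf space is the real tree $T_v$, and having horizontal width at most $\epsilon$ for all $\epsilon$ means exactly that the projection of the lifted $F(\ell)$ to $T_v$ is a single point. That is precisely the statement that the lift lies in one (pseudo-)leaf, and projecting back to $N$ finishes the argument.
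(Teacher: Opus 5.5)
Your proposal is correct and follows the paper's argument: build $C_i$ from long vertical and short horizontal segments so that $C_i\to\tau_{\mathcal{F}}$, note that $F(C_i)\to\tau_{\mathcal{F}}$ because $F$ fixes $\tau_{\mathcal{F}}$, and use the small width lemma (Lemma \ref{lem:small-width}, on which the direction of Theorem \ref{thm:convergence-criterion}(3) you quote rests) to see that $F(\ell)$ has arbitrarily small, hence zero, horizontal width. Two small repairs are needed: Lemma \ref{lem:bicorn-existence} gives no control over the point $p\in b$, so it cannot force $\ell\subset C_i$, and you should instead take $b$ tiny at an endpoint of $\ell$ and pass to the Hausdorff limit $F(\ell_i)\to F(\ell)$ of the nearby parallel segments $\ell_i\subset C_i$; also, Lemma \ref{lem:char-covers-have-all-lifts} cannot be used to lift $F$, since it needs $F(P)=P$, which is part of what is being proved, but your tree argument only uses a lift of the arc $F(\ell)$, so nothing is lost.
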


The strategy is the following:
We choose a sequence of curves $C_i$ constructed from very long vertical leaf segments and short horizontal leaf segments so that the $C_i$ converge to $\tau_\mathcal{F}$ by Proposition \ref{prop:foliations-give-bdary-pts}.
As vertices in $\fncg(N)$, the curves $F(C_i)$ also converge to $\tau_\mathcal{F}$ by assumption.
Then, we use the small width lemma (\ref{lem:small-width}) to find that $F(\sigma)$ is eventually contained in the $\epsilon$-neighbourhood of a leaf of $\mathcal{F}$.

\subsection{Exchanging the foliations}

Assume that we have a BSF structure $(\mathcal{F}_h,\mathcal{F}_v,d)$ with a corresponding boundary point $\tau_{\mathcal{F}_v}$ and a homeomorphism $F$ fixing $\tau_{\mathcal{F}_v}$ (and therefore the BSF structure and in particular the 1-prongs $P$).
Then the next lemma tells us that if another homeomorphism $G$ exists that exchanges the foliations $\mathcal{F}_h$ and $\mathcal{F}_v$, then $G$ also preserves $P$.

\begin{lemma}[cf. Lemma 5.3 of \cite{BHW2}]\label{lem:main-technical-lemma}
Let $(\mathcal{F}_v,\mathcal{F}_h,d)$ a BSF structure, and $P$ the set of 1-prongs.
Assume that the horizontal and vertical foliations $\mathcal{F}_h$,$\mathcal{F}_v$ are ending.
For any $\epsilon > 0$, there is a $K=K(\epsilon) > 0$ so that the following holds:
if $f$ is a homeomorphism so that
\begin{align*}
    \big( f(\tau_{\mathcal{F}_v}) \cdot \tau_{\mathcal{F}_h} \big)_{\beta_0} &> K, & \big( f(\tau_{\mathcal{F}_h}) \cdot \tau_{\mathcal{F}_v} \big)_{\beta_0} &> K
\end{align*}
then $d(f(p),p) < \epsilon$ for any $p\in P$.
\end{lemma}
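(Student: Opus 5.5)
We argue by contradiction, following the strategy of Lemma 5.3 of \cite{BHW2}. Suppose the statement fails for some $\epsilon>0$. Then there are homeomorphisms $f_k$ and, after passing to a subsequence (and using finiteness of $P$), a fixed $p\in P$ with
\[ \big( f_k(\tau_{\mathcal{F}_v}) \cdot \tau_{\mathcal{F}_h} \big)_{\beta_0} \to \infty, \qquad \big( f_k(\tau_{\mathcal{F}_h}) \cdot \tau_{\mathcal{F}_v} \big)_{\beta_0} \to \infty, \qquad d(f_k(p),p)\geq \epsilon .\]
By compactness of $N$ we may further assume $f_k(p)\to p'$ with $d(p,p')\geq\epsilon$.

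The first step is to build test curves that see the 1-prong $p$. Since $\mathcal{F}_v$ is ending, the pseudo-leaf through $p$ contains no other 1-prong. We take curves $C_i$ built from a long vertical segment of length $\ell_i\to\infty$ emanating from $p$ (a vertical prong ray), closed up by a short horizontal segment, and arranged to be non-separating by Lemma~\ref{lem:bicorn-existence}. By Proposition~\ref{prop:foliations-give-bdary-pts}, $C_i\to\tau_{\mathcal{F}_v}$. Applying $f_k$ gives $f_k(C_i)\to f_k(\tau_{\mathcal{F}_v})$. We then choose a diagonal sequence $i=i(k)$ so that the Gromov product of $f_k(C_{i(k)})$ with $\tau_{\mathcal{F}_h}$ tends to infinity; this uses the hypothesis together with $\delta$-hyperbolicity of $\fncg(N)$. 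By the Small Width Lemma~\ref{lem:small-width}, applied to $\mathcal{F}_h$, every bounded-size segment of $f_k(C_{i(k)})$ then $\epsilon'$-fellow travels a horizontal (pseudo-)leaf for large $k$. In particular this holds for the segment near the image $f_k(p)$, which is the endpoint where the vertical ray of $C_i$ folds back at the 1-prong. The symmetric construction applies with the roles of $\mathcal{F}_h$ and $\mathcal{F}_v$ swapped, using the second hypothesis.

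The key step is to show that such near-horizontal behaviour forces $p'$ to be a 1-prong of $\mathcal{F}_h$, hence $p'=p$ after accounting for the configuration. Near a 1-prong, any curve passing close to it and tracking the ray must turn back, which a foliation only permits at a 1-prong singularity. The difficulty is that $f_k$ is merely a homeomorphism, so "near $p$" is not controlled metrically. To handle this we pass to the $P$-resolving characteristic covering $\Sigma\to N$, where every $f_k$ lifts by Lemma~\ref{lem:char-covers-have-all-lifts}, and where geodesics behave well (no bigons). In $\Sigma$ the segment of $C_i$ around $p$ lifts to a segment through a branch point of order at least two, which has positive width in a uniform sense. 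Its image under the lift of $f_k$ must fellow travel a horizontal leaf. Using Lemma~\ref{lem:target}, a vertical geodesic of bounded length must then hit it, and we aim to derive a non-separating bicorn of bounded size contradicting the diverging Gromov product via Lemma~\ref{lem:size-bounds-distance} and Lemma~\ref{lem:nice-bicorn-paths}.

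I expect the main obstacle to be this local analysis at the 1-prong: turning "the image segment hugs a leaf except near $f_k(p)$, where it reverses direction" into a contradiction whenever $f_k(p)$ stays $\epsilon$-far from $P$. It requires uniform control, independent of $k$, of segment sizes under the unknown homeomorphisms, so the bound must come from the diagonal choice of $i(k)$ rather than from $f_k$ itself. A subtler point, special to the non-orientable setting, is keeping the test curves non-separating; this is handled by Lemma~\ref{lem:bicorn-existence} in place of the orientable homological argument.
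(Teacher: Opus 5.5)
There is a genuine gap. Your reduction to a sequence $f_k$, the diagonal choice of $i(k)$ and the appeal to the Small Width Lemma are fine. But the step that carries the proof is missing, and the route you sketch for it cannot work.

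\textbf{Why the local analysis at $f_k(p)$ fails.} Lemma~\ref{lem:small-width} only says that segments of size at most $B$ of $f_k(C_{i(k)})$ stay $\epsilon'$-close to horizontal leaves. It says nothing below scale $\epsilon'$. A simple closed curve can fold back inside a tiny disc of a regular flat chart without violating it, so ``turning back near $f_k(p)$'' gives no contradiction. The heuristic that turning back only happens at a 1-prong applies to leaves of a foliation, not to a single curve. Even passing to the image foliations, an analysis localized at $f_k(p)$ is not enough. Nothing local prevents $f_k$ from placing the image of a higher-order prong right next to $f_k(p)$; for example, Gauss--Bonnet forces a 3-prong on $\rp$ with three 1-prongs. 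Then near $f_k(p)$ the image foliations have total index zero and look regular at all scales above $\epsilon'$.

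\textbf{Why your proposed rescue fails.} The $P$-resolving covering is branched over $P$. Lemma~\ref{lem:char-covers-have-all-lifts} only lifts homeomorphisms that preserve the branch locus, and whether $f_k$ preserves $P$ is exactly what is unknown. Moreover, Lemma~\ref{lem:target} requires a segment of \emph{large} width and bounded size. The image segments are precisely the ones the Small Width Lemma makes thin. The width of the corresponding segment of $C_i$ is irrelevant, since $f_k$ can distort it arbitrarily.

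\textbf{The missing idea.} You need to argue at the points of $P$, where the geometry of $q$ is known at scale $\epsilon$, and use both image foliations simultaneously. This is what the paper does. Around each $p_i\in P$ it takes two bells: discs bounded by a horizontal (resp.\ vertical) base of length $\epsilon$ and a vertical (resp.\ horizontal) leaf arc wrapping around $p_i$. It uses the two hypotheses together with Lemma~\ref{lem:small-width} to show that leaves of $f(\mathcal{F}_v)$ and of $f(\mathcal{F}_h)$ can only leave their respective bells through the base. If $f(P)$ avoided the bells, one could choose a leaf of each image foliation meeting in exactly one point, which is incompatible with this exit behaviour. The contradiction is topological, coming from how leaves of two transverse image foliations intersect near their common 1-prongs, not metric.

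\textbf{Two further issues.} This argument only places $f(P)$ near $P$ as a set, which is also how the lemma is used in the proof of Theorem~\ref{thm:main-thing}. Your step ``hence $p'=p$'' is unsupported, since nothing prevents $f$ from permuting the 1-prongs. Separately, Lemma~\ref{lem:bicorn-existence} chooses the point and the flow line itself. So it cannot be used to make curves containing a prescribed prong ray non-separating.
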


The strategy is again to prove the contrapositive by constructing a neighbourhood $V$ of the 1-prongs so that if the Hausdorff distance of $f(P)$ and $V$ is at least $\epsilon$, then one of the claimed inequalities must fail.

This neighbourhood $V$ will be a union of so-called \emph{bells}.
For each $p_i\in P$, we have two bells $U_i$ and $U'_i$:
each consists of the \emph{base}, which is a horizontal (respectively vertical for $U'_i$) leaf segment of length $\epsilon$, such that its middle point lies on a vertical (respectively horizontal) leaf segment through  $p_i$ at distance $\epsilon$ from $p_i$, and the vertical (respectively horizontal) segments starting and ending on the base bounding a disc with the base, together with the vertical (respectively horizontal) leaf segment from $p_i$ to the middle of the base.
We can now show that leaves of $f(\mathcal{F}_v)$ need to leave a bell $U_i$ through its base, as otherwise we have an upper bound on the Gromov product through Lemma \ref{lem:small-width}.
The same argument applies to $f(\mathcal{F}_h)$ and a bell $U'_i$.

This leads to a contradiction to the assumption that $f(P)$ is disjoint from all bells, as that would mean that we can choose one leaf of each foliation so that they intersect in exactly one point, and this prevents both leaves to only leave their respective bell through the base.
Section 5 of \cite{BHW2} contains an illustration of the bells.

\subsection{Another WPD action}\label{subsec:existenceofpA}
We first fix a set of three punctures $P\subset N$ (including the puncture of $N$ that might already be there) and an orientable resolving covering $\Sigma\rightarrow N$.
We denote by $\hat{P}$ the lift of $P$ to $\Sigma$.
We now show:

\begin{proposition}\label{prop:wpd-downstairs}
    The action of $\operatorname{Mcg}(N\setminus P)$ on $\ncg(N\setminus P)$ has WPD.
\end{proposition}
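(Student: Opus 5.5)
The plan is to verify the three conditions in the definition of WPD for $G = \operatorname{Mcg}(N\setminus P)$ acting on $X = \ncg(N\setminus P)$. Hyperbolicity of $X$ is supplied by Theorem \ref{thm:unif-hyperbolicity}. With three punctures the graph has infinite diameter; this is exactly the range covered by \cite{kuno-hyperbolicity}.

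The first two conditions are routine.
\begin{enumerate}
\item For non-virtual-cyclicity, I will exhibit a non-abelian free subgroup of $G$. Two point-pushes along loops generating a free subgroup of $\pi_1(N\setminus P')$ (with $P'$ the other punctures) embed a free group of rank two via the Birman exact sequence \eqref{eq:birman-ses}, since \textsc{Push} is injective.
\item For the existence of a hyperbolic element, I will produce a pseudo-Anosov class on $N\setminus P$. Either use Penner-type constructions for non-orientable surfaces, or take a point-push along a filling loop; by Kra's theorem the latter is pseudo-Anosov, and the analogue holds for non-orientable surfaces via Wu's classification \cite{wu}. Its Thurston representative has ending foliations, so it acts loxodromically on $\ncg$. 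This follows as in Masur--Minsky, since its orbit has unbounded intersection with any fixed curve and hence leaves every bounded set.
\end{enumerate}

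The main obstacle is the third condition, WPD itself. Rather than redo Bestvina--Fujiwara's argument, I will transfer it from the orientable cover $\Sigma$ of the resolving covering, using the facts about $\Sigma \to N$ collected below.
\begin{itemize}
\item By Lemma \ref{lem:char-covers-have-all-lifts}, since $\Sigma\to N$ is characteristic, every homeomorphism of $N$ preserving $P$ lifts to $\Sigma$ preserving $\hat P$. This gives a homomorphism from $G$ into $\operatorname{Mcg}(\Sigma\setminus\hat P)$ modulo the finite deck group, well defined up to the deck group.
\item Lifts of isotopic maps differ by deck transformations composed with lifted isotopies. Hence the map has finite kernel: an element whose lift is isotopic to a deck transformation is determined by that transformation, by the Birman--Hilden-type rigidity for characteristic covers.
\item Taking elevations gives a coarsely Lipschitz, $G$-equivariant map $\Phi\colon \ncg(N\setminus P)\to \mathcal{C}(\Sigma\setminus\hat P)$. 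The full preimage of a non-separating curve is a multicurve, and we pick any component. Disjoint (or once-intersecting) curves have elevations at bounded distance, by Lemma \ref{lem:upper-bound-intersection-no} upstairs.
\item Conversely, Lemma \ref{lem:covering-criterion}, in its homotopy version, gives the opposite inequality. It shows that pseudo-Anosov elements, which lift to pseudo-Anosovs, remain loxodromic, and that $\Phi$ distorts distances along axes at most linearly.
\end{itemize}

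Given these facts, the WPD verification runs as follows. Suppose $\gamma$ moves both $x$ and $g^N x$ by at most $C$. Then its lift moves $\Phi(x)$ and $\hat g^{N}\Phi(x)$ by at most $C'$, where $\hat g^N\Phi(x)$ is far along the axis of the pseudo-Anosov $\hat g$. Bestvina--Fujiwara's theorem that $\operatorname{Mcg}(\Sigma\setminus\hat P)$ acts WPD on its curve complex then bounds the possible lifts to a finite set. Finiteness of the kernel then gives finitely many $\gamma$.

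The delicate points are the quasi-isometric control needed to say that $\Phi(g^Nx)$ travels far. If that proves awkward, the fallback is to run Bestvina--Fujiwara's original argument directly on $N\setminus P$. That argument uses only Masur--Minsky-type tight geodesics and finiteness, and I would replace them by bicorn paths (Theorem \ref{thm:unif-hyperbolicity}) together with the fact that finitely many mapping classes realise bounded intersection patterns.
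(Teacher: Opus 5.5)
Your proposal is correct and follows the paper's route: an $F_2$ of point-pushes gives non-virtual-cyclicity (the paper phrases this via the Fadell--Neuwirth sequence and pure braids, which is the same subgroup), a pseudo-Anosov gives the hyperbolic element, and WPD is transferred from the orientable resolving cover $\Sigma\setminus\hat P$ using the elevation bound from Lemma \ref{lem:upper-bound-intersection-no}. Your explicit checks that lifts of pseudo-Anosovs stay loxodromic and that the lifting map is finite-to-one fill in steps the paper leaves implicit; the reverse quasi-isometric inequality you call delicate is not needed, since you can take $N$ to be the exponent supplied by WPD upstairs (and although growth of intersection numbers alone does not prove loxodromicity downstairs, your lifting argument combined with the Lipschitz bound does).
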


\begin{proof}
    We first note that if $P$ had less than three elements, then $\ncg(N\setminus P)$ would be finite, and therefore there could not be any pseudo-Anosov classes in $\operatorname{Mcg}(N\setminus P)$.
    
    Our goal is now to identify a subgroup of $\operatorname{Mcg}(N\setminus P)$ which is not virtually cyclic.
    We can directly see this from a Fadell-Neuwirth short exact sequence as in the proof of Proposition 11 in \cite{goncalves-guaschi}.
    In particular, we have an inclusion
    \[F_2 \cong \pi_1(\rp\setminus \{x_1,x_2\},p) \hookrightarrow P_2(\rp \setminus \{p\}) \hookrightarrow \operatorname{Mcg}(N\setminus P),\]
    where we denote by $P_2$ the pure surface braid group on 2 strands, and $p,x_1,x_2$ are distinct points in $\rp$.
    Hence, $\operatorname{Mcg}(N\setminus P)$ is not virtually cyclic.
    
    Also, from the proof of Proposition 4.2 in \cite{kuno-hyperbolicity} we know that $\operatorname{Mcg}(N\setminus P)$ contains a pseudo-Anosov, which acts hyperbolically on $\ncg(N\setminus P)$.

    If remains to show that the third condition is also satisfied.
    We argue by contradiction:
    assume there is an element $[F]$ in $\operatorname{Mcg}(N\setminus P)$ and a point $[C]\in \ncg(N\setminus P)$ together with a constant $B>0$ such that for all $N>0$:
    \begin{equation} \label{eq:wpd-3-set}\{ [G]\in \operatorname{Mcg}(N\setminus P) \mid d_\ncg([C],[G(C)]), d_\ncg([F^N(C)],[G(F^N(C))])\leq B \} \end{equation}
    is infinite.
    Note that if $[C]$ and $[C']$ are adjacent in $\ncg(N\setminus P)$, then in the resolving covering for two elevations of these curves we get the bound $d_{\ncg}([\hat{C}],[\hat{C'}]) \leq 2k+1$ from Lemma \ref{lem:upper-bound-intersection-no}, where $k$ denotes the degree of the resolving covering.
    Therefore, a lift of $[F]$ in $\operatorname{Mcg}(\Sigma \setminus \hat{P})$ has the property that for $\hat{B} \coloneqq B\cdot(2k+1)$, the set defined analogously to \eqref{eq:wpd-3-set} is infinite.
    This contradicts that the action of $\operatorname{Mcg}(\Sigma \setminus\hat{P})$ on $\ncg(\Sigma \setminus \hat{P})$ has WPD.
\end{proof}

In particular, we immediately get:
\begin{corollary}
    The group $\operatorname{Mcg}(N\setminus P)$ is stably unbounded.
\end{corollary}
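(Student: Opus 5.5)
The corollary should follow by combining Proposition \ref{prop:wpd-downstairs} with the Bestvina--Fujiwara machinery and Bavard duality. The overall route is: WPD gives infinitely many non-trivial homogeneous quasi-morphisms, and one such quasi-morphism that is non-zero on some element gives that element positive stable commutator length.

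First, by Proposition \ref{prop:wpd-downstairs}, the action of $\operatorname{Mcg}(N\setminus P)$ on the hyperbolic graph $\ncg(N\setminus P)$ is WPD. Hyperbolicity here is Theorem \ref{thm:unif-hyperbolicity}. Proposition \ref{prop:useful-wpd}, parts (3) and (5), then says the action is non-elementary and that there are two hyperbolic elements $g_1,g_2$ that are not quasi-equivalent.

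Second, I need $g_1,g_2$ to be independent. If they were dependent, their quasi-axes would share rays at finite distance. By part (2) of Proposition \ref{prop:useful-wpd}, positive powers of $g_1^{\pm1}$ and $g_2$ would then coincide, and in particular be conjugate. Part (4) would then make $g_1$ and $g_2$ (or $g_1^{-1}$ and $g_2$) quasi-equivalent. In the second case I replace $g_1$ by $g_1^{-1}$; in the first case we get a contradiction. Alternatively, I can pick an element of the non-elementary action independent of both and use it instead. Theorem \ref{thm:bestvina-fujiwara-main-thm} then shows the space of non-trivial quasi-morphisms is infinite dimensional. Homogenising, I obtain a non-trivial homogeneous quasi-morphism $\phi$ with $\phi(g)\neq 0$ for some $g$. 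The Brooks-type quasi-morphisms in \cite{Bestvina} are in fact unbounded on a hyperbolic element, namely the one built from its axis.

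Third, Bavard duality (Theorem \ref{thm:bavard-duality}) gives $\operatorname{scl}(g)\geq |\phi(g)|/2D(\phi)>0$. Bavard duality as stated is for $\operatorname{Homeo}(M)$, but it holds for any group, with commutator length computed in the commutator subgroup. So I would apply it either on a finite-index subgroup of $\operatorname{Mcg}(N\setminus P)$ or to elements of the commutator subgroup, for instance $g^k$ with $g$ chosen so that this makes sense. Since a homogeneous quasi-morphism that is non-zero on $g$ is non-zero on $g^k$, and the commutator subgroup is normal, I can pick $g$ inside it. This is possible because the quasi-morphisms produced are non-trivial on the commutator subgroup, for otherwise they would be bounded perturbations of homomorphisms.

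The main obstacle I expect is the independence and quasi-equivalence bookkeeping needed to meet the exact hypotheses of Theorem \ref{thm:bestvina-fujiwara-main-thm}, together with ensuring the chosen element lies where scl is defined. The rest is direct citation.
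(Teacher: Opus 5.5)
Your route is the paper's: Proposition \ref{prop:wpd-downstairs}, then Bestvina--Fujiwara, then Bavard duality. The paper cites Theorem 7 of \cite{Bestvina}, which derives infinite-dimensionality of the space of non-trivial quasi-morphisms directly from WPD, so your first two steps unpack that citation. Your care about the commutator subgroup goes beyond the paper's one-line proof, and it matters: $\operatorname{Mcg}(N\setminus P)$ is not perfect, since the sign of the induced permutation of $P$ gives a nontrivial map to $\ZZ/2$. Your observation that a homogeneous quasi-morphism vanishing on the commutator subgroup is a homomorphism is correct.

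The first version of your independence argument does not work, however. Part (2) of Proposition \ref{prop:useful-wpd} is about two translates of one quasi-axis, i.e.\ two conjugates of a single element. It therefore says nothing about the axes of the unrelated $g_1,g_2$ from part (5). What dependence does give is the following, essentially from the definition with $h=\mathrm{id}$ plus stability of quasi-geodesics:
\begin{itemize}
\item if the asymptotic rays are equally oriented, then $g_1\sim g_2$, contradicting part (5);
\item otherwise $g_1\sim g_2^{-1}$.
\end{itemize}
In the second case, replacing $g_1$ by $g_1^{-1}$ gives a pair that is still dependent and is now quasi-equivalent, so Theorem \ref{thm:bestvina-fujiwara-main-thm} still does not apply. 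What you actually learn there is $g_1\not\sim g_1^{-1}$, which is the input for Proposition 5 of \cite{Bestvina}, as used in the proof of Theorem \ref{thm:main-thing}.

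Your alternative is the right repair, but it needs one more sentence. Since $\sim$ is an equivalence relation, a hyperbolic $g_3$ independent of both $g_1$ and $g_2$ cannot be quasi-equivalent to both. You then pair $g_3$ with one it is not equivalent to. Alternatively, simply cite Theorem 7 of \cite{Bestvina}, as the paper does, which already handles this bookkeeping.
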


\begin{proof}
    By the previous proposition and Theorem 7 of \cite{Bestvina}.
\end{proof}

\subsection{Characterising homeomorphisms with positive scl}
We can now continue in the same way as \cite{BHW2} and finally prove:

\begin{theorem}[cf. Theorem 5.2 of \cite{BHW2}]\label{thm:main-thing}
    Let $F$ be a Thurston representative of a pseudo-Anosov mapping class of $N$ relative to its set of 1-prongs $P$ so that $[F]$ is not conjugate to its inverse in $\operatorname{Mcg}(N\setminus P)$.
    Then $F$ has positive scl in $\operatorname{Homeo}(N)$.
    
    In particular, we get that $\operatorname{Homeo}_0(N)$ is stably unbounded.
\end{theorem}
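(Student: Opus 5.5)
The plan follows the strategy of Theorem 5.2 in \cite{BHW2}: show that $F$ acts hyperbolically on $\fncg(N)$ and is not quasi-invertible with respect to the action of $\operatorname{Homeo}(N)$. Then a Bestvina--Fujiwara type construction (Theorem \ref{thm:bestvina-fujiwara-main-thm}, or more precisely its refinement used in \cite{Bestvina} producing a homogeneous quasi-morphism that is unbounded on $\langle F\rangle$) gives a homogeneous quasi-morphism $\phi$ with $\phi(F)\neq 0$. Bavard duality (Theorem \ref{thm:bavard-duality}) then yields $\operatorname{scl}(F)>0$.

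Since $F$ is homotopic to the identity on $N$ and $\operatorname{Homeo}_0$ is normal, a positive-scl element inside $\operatorname{Homeo}_0(N)$ gives stable unboundedness. For the final sentence, I would also use Proposition \ref{prop:wpd-downstairs}, together with Proposition \ref{prop:useful-wpd}(4,5), to produce pseudo-Anosov classes in $\operatorname{Mcg}(N\setminus P)$ that are not conjugate to their inverses. I would then arrange, via point-pushing (the Birman sequence \eqref{eq:birman-ses}), that such a class is pushed from $P$ and hence lies in the kernel of $\textsc{Forget}$, so its Thurston representative lies in $\operatorname{Homeo}_0(N)$.

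Hyperbolicity of $F$ comes first. Let $q=(\mathcal{F}_v,\mathcal{F}_h,d)$ be the BSF structure of $F$; both foliations are ending, since the class is pseudo-Anosov relative to $P$. By Theorem \ref{thm:convergence-criterion}, $F$ fixes $\tau_{\mathcal{F}_v}$ and $\tau_{\mathcal{F}_h}$. Take a curve $C$ made of a long vertical and a short horizontal segment. Then $F^n(C)$ converges to $\tau_{\mathcal{F}_v}$ and $F^{-n}(C)$ converges to $\tau_{\mathcal{F}_h}$, by the fellow-travelling criterion in part 3 of Theorem \ref{thm:convergence-criterion}, because $F$ stretches vertical leaves. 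Since $\tau_{\mathcal{F}_v}\neq\tau_{\mathcal{F}_h}$, the isometry $F$ is loxodromic with these endpoints.

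The key step, and the main obstacle, is non-quasi-invertibility. Suppose $F$ were quasi-invertible. Then, by the boundary-point formulation of quasi-equivalence, there are $h_k\in\operatorname{Homeo}(N)$ with $h_k(\tau_{\mathcal{F}_v})\to\tau_{\mathcal{F}_h}$ and $h_k(\tau_{\mathcal{F}_h})\to\tau_{\mathcal{F}_v}$. Apply Lemma \ref{lem:main-technical-lemma} with $\epsilon$ smaller than half the minimal distance between points of $P$. For large $k$ this gives $d(h_k(p),p)<\epsilon$ for all $p\in P$, so after a small isotopy correction $h_k$ may be assumed to fix $P$ setwise. Thus $h_k$ defines a class in $\operatorname{Mcg}(N\setminus P)$. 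I would then pass to the resolving covering $\Sigma$, where by Lemma \ref{lem:char-covers-have-all-lifts} everything lifts. There I would argue, as in \cite{BHW2}, using Lemma \ref{lem:approx-by-NC} and the quasi-isometric comparison of $\fncg(N)$ with $\ncg(N\setminus P)$ near the axis, that $[h_k]$ nearly sends the axis of $[F]$ in $\ncg(N\setminus P)$ to the reversed axis along arbitrarily long segments. By Proposition \ref{prop:useful-wpd}(2,4), which applies thanks to Proposition \ref{prop:wpd-downstairs}, positive powers of $[F]$ and $[F]^{-1}$ would be conjugate in $\operatorname{Mcg}(N\setminus P)$. Uniqueness of roots of pseudo-Anosov classes up to the virtually cyclic centraliser should upgrade this to $[F]$ being conjugate to $[F]^{-1}$ (possibly after replacing $F$ by a power at the start), which is a contradiction.

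The delicate part is controlling the transfer of near-fixed-point data for $P$ into an honest mapping-class statement while keeping uniform constants. I would first try to follow the argument of \cite{BHW2} essentially verbatim, replacing their WPD input with Proposition \ref{prop:wpd-downstairs}.
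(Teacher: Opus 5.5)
Your overall route is the paper's: a quasi-axis of $F$ in $\fncg(N)$, Lemma \ref{lem:main-technical-lemma} to force $h_k(P)$ close to $P$, a small isotopy to land in $\operatorname{Mcg}(N\setminus P)$, a contradiction with the WPD action of Proposition \ref{prop:wpd-downstairs}, and then Proposition~5 of \cite{Bestvina} plus Bavard duality.

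The step that fails is how you close the contradiction. Quasi-invertibility gives you $g[F]^m g^{-1}=[F]^{-m}$, and you propose to upgrade this to a conjugacy between $[F]$ and $[F]^{-1}$ via uniqueness of roots. But roots of a pseudo-Anosov class are unique only up to the finite part of the virtually cyclic stabiliser of its foliation pair. The classes $g[F]g^{-1}$ and $[F]^{-1}$ have the same foliations and the same stretch factor, so you only get $g[F]g^{-1}=[F]^{-1}s$ with $s$ of finite order, which is not a conjugacy. Replacing $F$ by a power at the start is not available either, since the hypothesis says nothing about $[F]^m$. The paper makes no such upgrade. It uses the hypothesis together with Proposition \ref{prop:wpd-downstairs} directly as non-quasi-invertibility of the axis $[C_i]$ in $\ncg(N\setminus P)$, i.e.\ as the uniform bounds \eqref{eq:non-inv-mapping-class} over all $\varphi\in\operatorname{Mcg}(N\setminus P)$. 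By Proposition \ref{prop:useful-wpd}(4), that is exactly the statement that no positive power of $[F]$ is conjugate to a positive power of $[F]^{-1}$. This is the form in which the hypothesis has to enter; it cannot be derived from plain non-conjugacy by your root argument.

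Second, the ``delicate transfer'' you leave open is handled in the paper by the order in which constants are chosen, not by passing to $\Sigma$. WPD downstairs is already established, so the cover plays no role at this point. The order is:
\begin{enumerate}
\item Fix $I,K$ from \eqref{eq:non-inv-mapping-class}.
\item Fix $I'$ from hyperbolicity of $\fncg(N)$, so that for large $k$ and $i\geq I'$ the curve $G_k(C_{\pm i})$ (your $h_k$) lies $B$-close to the opposite half-axis.
\item Only then choose $\epsilon$ so that the $2\epsilon$-neighbourhood of $P$ misses $C_i$ for $\lvert i\rvert\leq I'$.
\end{enumerate}
The correcting isotopy is supported in that neighbourhood, and this choice is what the paper uses to ensure it does not change the images of $C_{\pm I'}$. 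The corrected class $[G']$ then directly violates \eqref{eq:non-inv-mapping-class}. Your $\epsilon$ depends only on the mutual distances of the points of $P$. That lets you correct $h_k$ to preserve $P$, but it gives no control over what the resulting mapping class does to the axis, which is the whole point of the step.
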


\begin{proof}
We denote by $\mathcal{F}_v$ respectively $\mathcal{F}_h$ the foliations fixed by $F$.
As our first step, we construct an axis for $F$:
for $C_0$ a non-separating curve disjoint from $P$, and set $C_i \coloneqq F^i(C_0)$.
Then, the $C_i$ form a quasi-geodesic axis of quality $L$ in $\fncg(N)$ for $F$.
Similarly, we also get a quasi-geodesic $[C_i]$ in $\ncg(N\setminus P)$ for $[F]$, with quality $L'$.

By a similar argument as before, we prove that the quasi-invertibility of $F$ would contradict the fact that $[F]$ is not quasi-invertible.

By contradiction, we assume that there exists a sequence $G_k$ of homeomorphisms so that
\begin{align*}
    G_k(\mathcal{F}_h)&\rightarrow \mathcal{F}_v, & G_k(\mathcal{F}_v)&\rightarrow \mathcal{F}_h
\end{align*}
considered as points in the Gromov boundary.
As $\fncg(N)$ is hyperbolic, we have:
for any $I$ and large $k$, if $i\geq I'(I) > I$, then \begin{enumerate}
    \item $G_k(C_i)$ is within distance $B$ of $C_{-j}$ for some $j>I$ and
    \item $G_k(C_{-i})$ is within distance $B$ of $C_j$ for some $j>I$.
\end{enumerate}
In this case, the distance $B$ depends only on $L$ and the hyperbolicity constant of $\fncg(N)$.

Our assumption on $[F]$ together with Proposition \ref{prop:wpd-downstairs} implies that the quasi-axis $[C_i]$ is not quasi-invertible, and therefore there exist $I$, $K$ such that for all mapping classes $\varphi\in\operatorname{Mcg}(N\setminus P)$ we have for all $i,j\geq I$:
\begin{align}\label{eq:non-inv-mapping-class} \big(\varphi([C_i])\cdot [C_{-j}]\big)_{[\omega]} &\leq K - B, & \big(\varphi([C_{-i}])\cdot [C_j]\big)_{[\omega]} &\leq K - B.\end{align}
Take $\epsilon>0$ such that the $2\epsilon$-neighbourhood of $P$ is disjoint from $C_i$ for $\lvert i \rvert \leq I'$.
By Lemma \ref{lem:main-technical-lemma}, for large $k$ we get that $G_k(P)$ has distance at most $\epsilon$ from $P$, and further that $G_k(C_{I'})$ respectively $G_k(C_{-I'})$ are within distance $B$ of $C_{-i}$ respectively $C_j$ for $i,j\geq I$.
We can now modify $G_k$ by a $C^0$-small isotopy supported in the $2\epsilon$-neighbourhood of $P$ to get a homeomorphism $G'$ preserving $P$, but not changing the images of $C_{I'}$ and $C_{-I'}$ by our assumption on $\epsilon$.
But then $G'$ defines a mapping class which violates \eqref{eq:non-inv-mapping-class}, proving that $F$ is not quasi-invertible.

By Proposition 5 of \cite{Bestvina}, there exists a non-trivial homogeneous quasi-morphism which takes a positive value at $F$.
Positive scl then follows from Bavard duality (Theorem \ref{thm:bavard-duality}) (see also Theorem 2.11 of \cite{BHW2}).
In particular, Propositions \ref{prop:wpd-downstairs} and \ref{prop:useful-wpd} imply that there exists a homeomorphism satisfying the conditions of the theorem, and therefore $\operatorname{Homeo}_0(N)$ is stably unbounded.
\end{proof}

\begin{remark}\label{rem:homeo-to-diff}
This result is also true for $\operatorname{Diff}_0(N)$:
the restriction of a homogeneous quasi-morphism on $\operatorname{Homeo}(N)$ to diffeomorphisms is still a homogeneous quasi-morphism, and unboundedness follows as diffeomorphisms are dense (see e.g. Chapter 2 of \cite{hirsch}) and homogeneous quasi-morphisms on homeomorphism groups are continuous (see the appendix of \cite{BHW1}).
\end{remark}

It remains to see that the previous theorem and remark are also true for the Möbius strip.
\begin{proof}[Proof of Theorem \ref{thm:intro}]
Assume that $M$ is a closed surface.
For a set $P = \{p_1,\dotsc,p_n\}$ of distinct points in $M$, we will consider the following surfaces:
first, define $ M_\text{punc} \coloneqq M\setminus P$, and second, define $M_\text{cpct}\coloneqq M\setminus \left(\bigcup_{i=1}^n B_\epsilon(p_i) \right)$ for $\epsilon$ such that the $B_\epsilon(p_i)$ are disjoint.
Note that the interior of $M_\text{cpct}$ is homeomorphic to $M_\text{punc}$.
This implies that restricting a homeomorphism of $M_\text{cpct}$ to the interior induces a homomorphism
\[r\colon \operatorname{Homeo}(M_\text{cpct}) \longrightarrow \operatorname{Homeo}(M_\text{punc}).\]
In particular, if $F\in\operatorname{Homeo}(M_\text{cpct})$ is a homeomorphism, then $\operatorname{cl}(F)\geq \operatorname{cl}(r(F))$.
This implies that our results for the punctured projective plane are also true for the Möbius strip.
\end{proof}

\bibliographystyle{alpha}
\bibliography{refs}

\end{document}